\newtheorem{lemma}{\scshape Lemma}
\newtheorem{proposition}{\scshape Proposition}
\newtheorem{remark}{\scshape Remark}
\newtheorem{theorem}{\scshape Theorem}
\title{Boundary Stabilization of a Degenerate Euler-Bernoulli Beam under Axial Force and Time Delay}
\author[1]{Ben Bakary Junior SIRIKI \footnote{Corresponding author: \url{benbjsiriki@gmail.com}}}
\author[2]{Adama COULIBALY}
\affil[1]{Université Nangui Abrogoua, Abidjan, Côte D'Ivoire}
\affil[2]{Université Félix Houphouët-Boigny, Abidjan, Côte D'Ivoire}
\date{\today}
\providecommand{\keywords}[1]
{
	\small
	\textbf{Keywords:} #1
}
\begin{document}
	
\maketitle
	
	\begin{abstract}
This paper provides a qualitative analysis of a non-uniform Euler-Bernoulli beam with degenerate flexural rigidity, subjected to axial force and boundary control with time delay $\tau > 0$. By reformulating the system as an abstract evolution problem in an augmented Hilbert space incorporating weighted Sobolev spaces, we employ semigroup theory to ensure well-posedness. Using the energy multiplier method and a non-standard Lyapunov functional featuring weighted integral terms, we establish uniform exponential energy decay and provide a precise decay rate estimate. This work extends the results of Salhi et al. \cite{salhi2025} and Siriki et al. \cite{siriki2025} by incorporating axial force and generalized control laws, including rotational velocity control. 
The proposed framework offers a robust approach for analyzing complex distributed systems.
	\end{abstract}
	
	\keywords{Euler-Bernoulli beam, degenerate flexural rigidity, axial force, Boundary delay control, exponential stability, weighted Sobolev spaces}

\section{Introduction}
	The analysis and control of systems governed by partial differential equations (PDEs), commonly referred to as distributed parameter systems, constitute a highly active field of investigation within the scientific community. Specifically, the development of complex physical structures in various engineering fields, including aeronautics \cite{chakravarthy2010, vinod2007, kundu2017}, robotics \cite{kundu2017, luo1993}, and civil engineering \cite{du2022, fryba1996, li2013}, has ensured that the control and stabilization of Euler-Bernoulli beams remains an area of sustained research focus for several decades.

In this article, we investigate the well-posedness and exponential stability of a non-uniform Euler-Bernoulli beam problem. This beam is clamped at one end, and its free end is subjected to a linear control system incorporating a fixed time delay.
The vibrational motion of the beam is governed by the following equations:
\begin{equation} \label{eq:1}
	\left\{
	\begin{array}{>{\displaystyle}l}
	 u_{tt} + \left( \sigma(x) u_{xx} \right)_{xx} - \left( q(x) u_x \right)_x = 0 , \quad (x, t) \in (0,1) \times (0, \infty), \\
	u(0,t) = \mathcal{B} u(0,t) = 0, \quad t \in (0, \infty),  \\
	- \sigma(1) u_{xx}(1,t) = \kappa_r u_x(1,t) + \kappa_a u_{xt}(1,t), \quad t \in (0, \infty),  \\
	\left( \sigma (x) u_{xx} \right)_x(1,t) - q(1) u_x(1,t) = \kappa_v u_t(1,t) + \kappa_d  u_t(1,t-\tau) + \kappa_b u(1,t), \quad t \in (0, \infty), \\
	u(x,0) = u_0(x), \, u_t(x,0) = u_1(x), \quad x \in [0, 1], \\
	u_t(1, t-\tau) = g_0(t-\tau), \quad t \in (0, \tau),
	\end{array}
	\right.
\end{equation}
where $\tau > 0$ denotes the time delay and $u(x,t)$ is the deflection of the beam at position $x$ and time $t$. The functions $\sigma: (0,1) \rightarrow \mathbb{R}_+$ and $q: (0,1) \rightarrow \mathbb{R}_+$ denote, respectively, the flexural rigidity and the axial force distribution along the beam. We assume that the flexural rigidity $\sigma(x)$ is degenerate, and its degeneracy is measured by the constant $K_\sigma$ defined by:
\begin{align} \label{eq:2}
	K_\sigma := \underset{x \in (0, \, 1]}{\sup} \frac{x |\sigma'(x)|}{\sigma(x)}.
\end{align}
In particular, $\sigma$ is said to be:
\begin{itemize}
\item weakly degenerate (WD) if $\sigma(0) = 0, \; \sigma > 0$ on $(0, 1]$, $\sigma \in C[0,1] \cap C^1(0,1]$ and if $K_\sigma \in (0, 1)$;
\item strongly degenerate (SD) if $\sigma(0) = 0, \; \sigma > 0$ on $(0, 1]$, $\sigma \in C^1[0,1]$ and if $K_\sigma \in [1, 2)$.
\end{itemize}
Due to the degeneracy of the problem at $x=0$, we point out that the operator $\mathcal{B}$ is defined by:
\begin{gather} \label{eq:3}
\mathcal{B} u(x,t) := \left\{
\begin{array}{l}
u_x(x,t) \quad \text{ if } \sigma \text{ is (WD),} \\ ~ \\
\left(\sigma u_{xx}\right)(x,t) \quad \text{ if } \sigma \text{ is (SD).}
\end{array}
\right.
\end{gather}
In addition, the constants $\kappa_r, \kappa_v, \kappa_b \geq 0$, $\kappa_a > 0$ and $\kappa_d \neq 0$ are the control gains and $u_0, \, u_1, \, g_0$ are the initial data.

The qualitative analysis of Euler-Bernoulli beam models subjected to linear boundary controls is a widely investigated topic in the literature.
Research interest was initially spurred in the 1980s by the NASA SCOLE (Spacecraft Control Laboratory Experiment) project, which aimed at controlling the dynamics of large flexible space structures (see \cite{balakrishnan1986}).
Subsequently, since the early 2000s, the stabilization of these systems has been the subject of extensive research, including both uniform beam models and those with variable physical parameters.
Research in this area has primarily converged upon two distinct stabilization strategies.
For instance, Bao-Zhu Guo et al. in \cite{guo2001} studied a uniform beam model and demonstrated its well-posedness and exponential stability using a frequency domain approach. They subsequently extended these results to a model with variable coefficients (see \cite{guo2002}). These analyses fundamentally rely on the associated linear operator being dissipative and discrete, and on the real part of the asymptotic expression of its generalized eigenvalues being negative.
Another common strategy relies on the Lyapunov method. Dadfarnia et al. \cite{dadfarnia2004} developed a feedback controller to exponentially stabilize the displacement of a cantilever beam. Their approach is based on the construction of a suitable Lyapunov functional $V(t)$ whose time derivative satisfies the differential inequality $\dfrac{d V(t)}{dt} \leqslant - \lambda V(t)$, with $\lambda > 0$.
Other studies have addressed questions of solution existence and stability for problems similar to \eqref{eq:1} involving different boundary controls with or without delay (see \cite{camasta2025a, camasta2025, camasta2024, guo2008, marc2017, teya2023, wang2005, shang2012}), concerning degenerate flexural rigidity (see \cite{salhi2025, akil2025}), or considering the presence of an axial force (see \cite{siriki2025, ledkim2023, benamara2025}).
It is noteworthy that the overwhelming majority of existing work is limited to non-degenerate beam models and/or does not incorporate the simultaneous effect of axial force and time delay.
Consequently, questions regarding the existence and stability of solutions in a context integrating the degeneracy of flexural rigidity, the presence of an axial force, and a time delay within a feedback loop remain largely unexplored. 

This paper addresses this gap by developing an analytical framework that accounts for these physical complexities simultaneously.
We obtain two main results under the following condition:
\begin{align} \label{eq:4}
	|\kappa_d| < \kappa_v. 
\end{align}
First, we prove the well-posedness of problem \eqref{eq:1}. To overcome the loss of uniform ellipticity due to the degenerate flexural rigidity $\sigma(x)$ and the impact of the time delay $\tau > 0$, we construct a novel Hilbert state space built upon suitable weighted Sobolev spaces (see \cite{salhi2025, alabau2006, alabau2017}). On this state space, we reformulate system \eqref{eq:1} as an abstract evolution problem. 
The application of semigroup theory then ensures the existence and uniqueness of a solution which depends continuously on the initial data.
Second, we establish the exponential stability of the solution of system \eqref{eq:1}. The combination of degeneracy, axial force, and time delay in system \eqref{eq:1} can potentially lead to instability (see \cite{datko1991, datko1993}). To address this, we construct a novel Lyapunov functional from weighted integral terms. By performing rigorous estimates of this functional and its time derivative via the energy multiplier method, we show that the system's energy decays exponentially to zero, and provide a precise estimate of the decay rate.

The remainder of this article is organized as follows. Section 2 introduces the assumptions on the axial force function and defines the weighted functional spaces adapted to the degenerate flexural rigidity.
The well-posedness of problem \eqref{eq:1} is then addressed in Section 3, where we construct the energy space and ensure the existence and uniqueness of the solution.
In Section 4, we prove the exponential stability of system \eqref{eq:1} and derive a precise estimate of the energy decay rate. 
Finally, the paper closes with a conclusion.
 
\section{Assumptions and preliminaries}

	\subsection{Assumptions}
Throughout the remainder of this work, we assume that the axial force distribution $q$ satisfies the following structural conditions:
\begin{equation} \label{eq:5}
	\left\{
	\begin{array}{>{\displaystyle}l}
	 q \in W^{1, \infty}(0,1), \\
	 0 < q_0 \leqslant q(x) \leqslant q_1, \; \forall x \in [0, 1],\\
	 |q'(x)| \leqslant q_2, \; \forall x \in [0, 1].
	\end{array}
	\right.
\end{equation}
Thus, we have the following immediate functional implications:
\begin{enumerate}
\item since $q \in W^{1, \infty}(0,1)$, then $\sqrt{q} u \in L^2(0,1)$ for all $u \in L^2(0,1)$;
\item for stabilization issue, we have $\displaystyle \frac{x |q'(x)|}{q(x)} \leqslant \frac{q_2}{q_0}$ for all $x \in (0,1)$.
\end{enumerate}
	
	\subsection{Functional spaces}

In order to construct the Hilbert state space on which we will  examine the well-posedness of the delayed problem \eqref{eq:1}, and consistent with the methodology employed in the anterior works \cite{salhi2025, siriki2025, camasta2024, camasta2025}, we introduce some weighted Sobolev spaces naturally associated with degenerate operators. \newline
Let the Hilbert space:
\begin{equation} \label{eq:6}
	V_\sigma^2(0, 1) := \left\{
	\begin{array}{>{\displaystyle}l}
	 \Big\{ u \in H^1(0,1): \, u' \text{ is absolutely continuous on } [0,1],  \\
	  \qquad \qquad \qquad \quad \sqrt{\sigma} u'' \in L^2(0,1) \; \text{ if } \underline{\sigma \text{ is (WD)}} \Big\};\\
	 \Big\{ u \in H^1(0,1): \, u' \text{ is locally absolutely continuous on } (0,1], \\
	 \qquad \qquad \qquad \qquad \qquad \qquad \sqrt{\sigma} u'' \in L^2(0,1) \; \text{ if } \underline{\sigma \text{ is (SD)}} \Big\}.
	\end{array}
	\right.
\end{equation}
equipped with the inner product defined as:
\begin{gather} \label{eq:7}
\prec u, v \succ_{\, 2,\sigma} = \int_{0}^{1} \bigg( \sigma(x) u''(x) v''(x) \, + u'(x) v'(x) + \, u(x) v(x) \bigg) \, dx 
\end{gather}
for all $u, \, v \in V_\sigma^2(0, 1)$ and associated norm:
\begin{align} \label{eq:8}
\lVert u \rVert_{2, \sigma} & := \left( \lVert \sqrt{\sigma} u'' \rVert_{L^2(0,1)}^2 + \lVert u' \rVert_{L^2(0,1)}^2 + \lVert u \rVert_{L^2(0,1)}^2 \right)^{\frac{1}{2}}. 
\end{align}
Consider the linear subspace $H^2_\sigma(0,1) \subset V_\sigma^2(0, 1)$ defined as follows:
\begin{align} \label{eq:9}
H^2_\sigma(0,1) := \left\{u \in V_\sigma^2(0, 1): \; u(0) = 0\right\}.
\end{align}

\begin{remark}\label{2r1}
Let $u \in H_\sigma^2(0,1)$. Since $u(0) = 0$, the following estimates hold:
\begin{gather}
\lVert u \rVert_{L^2(0,1)}^2 \leqslant \lVert u' \rVert_{L^2(0,1)}^2 \leqslant \frac{1}{q_0}\lVert \sqrt{q} u' \rVert_{L^2(0,1)}^2,  \label{eq:11} \\
|u'(1)|^2 \leqslant 2 \left( \frac{1}{q_0}\lVert \sqrt{q} u' \rVert_{L^2(0,1)}^2  + \dfrac{\lVert \sqrt{\sigma} u'' \rVert_{L^2(0,1)}^2}{\sigma(1)(2-K_\sigma)} \right). \label{eq:12}
\end{gather}
\end{remark}
Next, we introduce 
\begin{align} \label{eq:13}
Q_\sigma(0,1) := \left\{u \in H_\sigma^2(0,1): \, \sigma u'' \in H^2(0,1) \right\}.
\end{align}
Notice that if $u \in Q_\sigma(0,1)$, then $\sigma u''$ is continuously differentiable on $|0, \, 1]$. The Gauss-Green formula becomes:
\begin{gather} \label{eq:14}
	\int_0^1 (\sigma u'')'' v \, dx = -[ \sigma u'' v' ]_{x=0}^{x=1} + \int_{0}^{1} \sigma u'' v'' \, dx \quad \forall (u, v) \in Q_\sigma(0,1) \times H_{\sigma}^2(0,1).
\end{gather}
In order to complete the required functional framework for the qualitative analysis of system \eqref{eq:1}, we incorporate the boundary conditions $\eqref{eq:1}_2$ of the solution space, and thus define the following spaces:
\begin{align}
	H_{\sigma,0}^2(0,1) = \left\{
	\begin{array}{>{\displaystyle}l}
	 \left\{ u \in H^2_\sigma(0,1): \, u'(0) = 0  \right\} \; \text{ if } \underline{\sigma \text{ is (WD)}},\\
	 H^2_\sigma(0,1) \; \text{ if } \underline{\sigma \text{ is (SD)}};
	\end{array}
	\right. \label{eq:15} \\
	Q_{\sigma,0}(0, 1) = \left\{
	\begin{array}{>{\displaystyle}l}
	 \left\{ u \in Q_\sigma(0,1): \, u'(0) = 0  \right\}\; \text{ if } \underline{\sigma \text{ is (WD)}},\\
	 \left\{ u \in Q_\sigma(0,1): \, (\sigma u'')(0) = 0 \right\} \; \text{ if } \underline{\sigma \text{ is (SD)}}.
	\end{array}
	\right. \label{eq:16}
\end{align}

\section{Well-posedness of the closed-loop}

	\subsection{Semigroup setup}

		
Setting:
\begin{align} \label{eq:17}
w(s,t) := u_t(1, t- s \tau), \quad (s, t) \in (0, 1) \times (0, +\infty),
\end{align}
 the function $w$ satisfies the following system:
\begin{equation} \label{eq:18}
	\left\{
	\begin{array}{>{\displaystyle}l}
	\tau w_t + w_s = 0, \quad (s, t) \in (0, 1) \times (0, +\infty),\\
	w(0, t) = u_t(1, t), \quad t \in (0, \infty), \\
	w(1, t) = u_t(1, t-\tau), \quad t \in (0, \infty), \\
	w(s, 0) = g_0(-s \tau), \quad s \in (0, 1).
	\end{array}
	\right.
\end{equation}
The delayed system \eqref{eq:1} can be reformulated as follows: 
\begin{equation} \label{eq:19}
	\left\{
	\begin{array}{>{\displaystyle}l}
	 u_{tt} + \left( \sigma(x) u_{xx} \right)_{xx} - \left( q(x) u_x \right)_x = 0 , \quad (x, t) \in (0, 1) \times (0, +\infty),, \\
	\tau w_t + w_s = 0, \quad (s, t) \in (0, 1) \times (0, +\infty), \\
	u(0,t) = \mathcal{B} u(0,t) = 0, \quad t \in (0, +\infty),  \\
	- \sigma(1) u_{xx}(1,t) = \kappa_r u_x(1,t) + \kappa_a u_{xt}(1,t), \quad t \in (0, +\infty), \\
	\left( \sigma (x) u_{xx} \right)_x(1,t) - q(1) u_x(1,t) = \kappa_v u_t(1,t) + \kappa_d  w(1,t) + \kappa_b u(1,t), \quad t \in (0, +\infty), \\
	w(0, t) = u_t(1, t), \quad t \in (0, +\infty), \\
	w(1, t) = u_t(1, t-\tau), \quad t \in (0, +\infty), \\
	u(x,0) = u_0(x), \, u_t(x,0) = u_1(x), \quad x \in [0, 1], \\
	w(s, 0) = g_0(-s \tau), \quad s \in (0, 1).
	\end{array}
	\right.
\end{equation}
Given the equivalence between problems \eqref{eq:1} and \eqref{eq:19}, we will transform the latter into an abstract evolution problem in appropriate Hilbert state space. 
Consider the Hilbert space $\Big(\mathscr{H}; \, \prec \cdot, \cdot \succ\Big)$ defined by:
\begin{align}
\mathscr{H} & := H^2_{\sigma,0}(0,1) \times L^2(0,1) \times L^2(0,1), \label{eq:20}\\
\begin{split} \label{eq:21}
\prec U_1, U_2 \succ & := \int_0^1 v_1(x) v_2(x) dx + \int_0^1 q(x) u_{1}'(x) u_{2}'(x) dx \\
& \quad + \int_0^1 \sigma(x) u_{1}''(x) u_{2}''(x) dx + \, \gamma \tau \int_0^1 w_1(s) w_2(s) ds \\
& \quad + \kappa_b u_1(1) u_2(1) + \kappa_r u_{1}'(1) u_{2}'(1),
\end{split}
\end{align}
for all $U_i := (u_i, v_i, w_i) \in \mathscr{H}, \, i = 1, \, 2$, where $\gamma$ is a positive constant that we will specify later.
Next, we define a linear operator $\mathbb{A}$ on the energy space $\mathscr{H}$, whose action and domain $\mathscr{D}(\mathbb{A})$ are given by:
\begin{gather}
\mathscr{D}(\mathbb{A}) := 
\left\{
\begin{array}{l}
U = (u, v, w) \in Q_{\sigma,0}(0,1) \times H^2_{\sigma,0}(0,1) \times H^1(0,1) : \\
w(0) = v(1), \, -\sigma(1) u''(1) = \kappa_r u'(1) + \kappa_a v'(1), \\
(\sigma u'')'(1) - q(1)u'(1) = \kappa_v v(1) + \kappa_d w(1) + \kappa_b u(1)
\end{array}
\right\}, \label{eq:22} \\
\mathbb{A} U
:= \begin{pmatrix}v \\ (q u')' - (\sigma u'')'' \\ - \tau^{-1} w' \end{pmatrix}. \label{eq:23}
\end{gather}
Clearly $\mathscr{D}(\mathbb{A})$ is dense in $\mathscr{H}$.
Based on the preceding considerations, and setting $U(t) := (u(\cdot, t), \, v(\cdot, t), \, w(\cdot, t)) \in \mathscr{D}(\mathbb{A})$, we rewrite the system \eqref{eq:19} in the abstract form:
\begin{equation} \label{eq:24}
	\left\{
	\begin{array}{>{\displaystyle}l}
	 \dot{U}(t) = \mathbb{A} U(t), \; t \in (0, \infty), \\
	 U(0) = U_0 = \left(u_0, \, u_1, \, g_0(-s\tau)\right), \quad s \in (0, 1).
	\end{array}
	\right.
\end{equation}

	\subsection{Existence and uniqueness of the solution}

	This section is devoted to establishing the existence and uniqueness of the solution to the Cauchy problem \eqref{eq:24}. To this end, we show that the operator $\mathbb{A}$ is $m-$dissipative, which garantees, via the L\"umer-Phillips theorem, that it generates a $C_0-$semigroup of contractions (see \cite{pazy1983, engel2000}). \newline 
Throughout this section, we suppose \eqref{eq:4} and that:
\begin{align} \label{eq:24a}
\left|\kappa_d\right| < \gamma < 2 \kappa_v - \left|\kappa_d\right|.
\end{align}
\begin{proposition} \label{p1}
Assume that the function $\sigma$ is (WD) or (SD). The linear operator $\mathbb{A}$ defined by \eqref{eq:22}-\eqref{eq:23} is $m-$dissipative. 
\end{proposition}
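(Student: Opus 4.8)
The plan is to verify the two defining properties of $m$-dissipativity separately: dissipativity of $\mathbb{A}$, and surjectivity of $I - \mathbb{A}$. The space is real, so no real parts appear.

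For dissipativity I would take an arbitrary $U = (u,v,w) \in \mathscr{D}(\mathbb{A})$ and compute $\prec \mathbb{A}U, U \succ$ directly from \eqref{eq:21} and \eqref{eq:23}. Integrating $\int_0^1[(qu')' - (\sigma u'')'']v\,dx$ by parts — using the Gauss-Green formula \eqref{eq:14} for the fourth-order part and an ordinary integration by parts for the second-order part — the interior integrals $\int_0^1 q u'v'\,dx$ and $\int_0^1 \sigma u''v''\,dx$ cancel exactly against the corresponding terms already present in the inner product, and the boundary contributions at $x=0$ vanish thanks to $v(0)=0$ and the degenerate conditions encoded in \eqref{eq:15}--\eqref{eq:16}. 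The delay term $\gamma\tau\int_0^1(-\tau^{-1}w')w\,ds = -\tfrac{\gamma}{2}[w^2]_0^1$ produces, via $w(0)=v(1)$, the quantity $\tfrac{\gamma}{2}v(1)^2 - \tfrac{\gamma}{2}w(1)^2$. Substituting the two feedback conditions from \eqref{eq:22} collapses everything to
\begin{align*}
\prec \mathbb{A}U, U \succ = -\kappa_a |v'(1)|^2 - \Big(\kappa_v - \tfrac{\gamma}{2}\Big)v(1)^2 - \kappa_d v(1)w(1) - \tfrac{\gamma}{2}w(1)^2.
\end{align*}
Since $\kappa_a > 0$, it remains to show the form in $(v(1),w(1))$ is negative semidefinite, i.e. that $\left(\begin{smallmatrix}\kappa_v-\gamma/2 & \kappa_d/2\\ \kappa_d/2 & \gamma/2\end{smallmatrix}\right)$ is positive semidefinite. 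Nonnegativity of its trace and determinant reduces to $\gamma(2\kappa_v-\gamma) \geq \kappa_d^2$, which holds precisely for $\gamma$ in the admissible window \eqref{eq:24a} granted by \eqref{eq:4}.

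For maximality I would fix $F = (f,g,h) \in \mathscr{H}$ and seek $U=(u,v,w)\in\mathscr{D}(\mathbb{A})$ with $(I-\mathbb{A})U = F$. The system decouples: the first component gives $v = u - f$; the third is the ODE $w' + \tau w = \tau h$ with datum $w(0)=u(1)-f(1)$, solved explicitly as $w(s) = (u(1)-f(1))e^{-\tau s} + \tau\int_0^s e^{-\tau(s-r)}h(r)\,dr$, so that $w(1)$ is an affine function of the single unknown number $u(1)$. Inserting $v=u-f$ into the second component yields the fourth-order elliptic problem $(\sigma u'')'' - (qu')' + u = g+f$ subject to the two feedback conditions, with $w(1)$ now known. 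I would treat this variationally: multiplying by $\phi\in H^2_{\sigma,0}(0,1)$, integrating by parts and inserting the boundary conditions gives $a(u,\phi)=L(\phi)$ with
\begin{align*}
a(u,\phi) = \int_0^1 \sigma u''\phi''\,dx + \int_0^1 q u'\phi'\,dx + \int_0^1 u\phi\,dx + \big(\kappa_v + \kappa_d e^{-\tau} + \kappa_b\big)u(1)\phi(1) + (\kappa_r+\kappa_a)u'(1)\phi'(1),
\end{align*}
and $L$ a bounded functional collecting $f,g,h$. Continuity of $a$ and $L$ on $H^2_{\sigma,0}(0,1)$ follows from the trace estimates \eqref{eq:11}--\eqref{eq:12}, while coercivity reduces to the positivity of the boundary coefficient: since $e^{-\tau}<1$,
\begin{align*}
\kappa_v + \kappa_d e^{-\tau} + \kappa_b \geq \kappa_v - |\kappa_d|e^{-\tau} > \kappa_v - |\kappa_d| > 0
\end{align*}
by \eqref{eq:4}, so all boundary terms in $a(u,u)$ are nonnegative and $a(u,u) \geq \min(1,q_0)\,\lVert u\rVert_{2,\sigma}^2$. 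Lax-Milgram then yields a unique $u\in H^2_{\sigma,0}(0,1)$; testing against $\phi\in C_c^\infty(0,1)$ recovers the equation distributionally, upgrading regularity to $\sigma u''\in H^2(0,1)$ (hence $u\in Q_{\sigma,0}(0,1)$), and integrating back recovers the two natural boundary conditions. Reconstructing $v=u-f$ and $w$ places $U$ in $\mathscr{D}(\mathbb{A})$, giving surjectivity.

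I expect the main obstacle to be the careful bookkeeping of the boundary terms at the degenerate endpoint $x=0$: one must confirm, separately in the (WD) and (SD) cases, that $\sigma u'' v'$ and $(\sigma u'')'v$ (and their $\phi$-counterparts) vanish at $x=0$, using the defining conditions of \eqref{eq:15}--\eqref{eq:16} together with $\sigma(0)=0$. The other delicate point is keeping the boundary coefficient $\kappa_v+\kappa_d e^{-\tau}+\kappa_b$ positive when $\kappa_d<0$; this is exactly where \eqref{eq:4} enters, and it is what makes the variational problem coercive despite the destabilizing delay feedback.
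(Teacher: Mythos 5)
Your proposal is correct and follows essentially the same route as the paper: integration by parts reduces $\prec\mathbb{A}U,U\succ$ to the same boundary quadratic form (the paper bounds it via Young's inequality where you check positive semidefiniteness of the $2\times 2$ matrix, an equivalent use of \eqref{eq:24a}), and maximality is obtained by the same decoupling of $v$ and $w$, reduction to the fourth-order variational problem \eqref{eq:30}, Lax--Milgram, and recovery of the boundary conditions. Your explicit verification that the boundary coefficient $\kappa_v+\kappa_d e^{-\tau}+\kappa_b$ is positive is a welcome detail the paper leaves implicit (and your $e^{-\tau}$ corrects what appears to be a typo, $e^{-r}$, in \eqref{eq:31}).
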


\begin{proof}~\newline
\underline{$\mathbb{A}$ \itshape is dissipative}. Let $U = (u, v, w) \in \mathscr{D}(\mathbb{A})$. We have:
\begin{align} \label{eq:25}
\begin{split}
\prec \mathbb{A} U, U\succ & = \int_0^1 (qu')'(x) v(x) dx - \int_0^1 (\sigma u'')''(x) v(x) dx
 - \gamma \int_0^1 w(s) w'(s) ds \\
& \quad + \int_0^1 \sigma(x)v''(x)u''(x) dx + \int_0^1 q(x)u'(x)v'(x) dx + \kappa_b u(1) v(1) \\
& \quad  + \kappa_r u'(1) v'(1).
\end{split}
\end{align}
Integrating by parts the first three integral terms of the right hand side of the equality \eqref{eq:25} and using the boundary conditions $\eqref{eq:19}_3-\eqref{eq:19}_7$ yields:
\begin{align}  \label{eq:26}
\prec \mathbb{A} U, U\succ & = -\left(\kappa_v - \frac{\gamma}{2} \right) w^2(0) - \frac{\gamma}{2} w^2(1) - \kappa_d w(0) w(1) - \kappa_a \left(v'(1)\right)^2.
\end{align}
Applying Young's inequality to the term $- \kappa_d w(0) w(1)$ and using \eqref{eq:24a}, we deduce that:
\begin{gather} \label{eq:27}
\prec \mathbb{A} U, U\succ \leqslant -\left(\kappa_v - \frac{\gamma + \left|\kappa_d\right|}{2}\right) w^2(0) - \frac{\gamma - \left|\kappa_d\right|}{2} w^2(1) - \kappa_a \left(v'(1)\right)^2 \leqslant 0.
\end{gather}

~\newline
\underline{$\mathbb{A}$ \itshape is maximal}. 
Equivalently, the demonstration reduces to establishing the surjectivity of the linear operator $I-\mathbb{A}$. Consequently, we must prove that for all $F = (f, g, h) \in \mathscr{H}$, the equation $\left(I-\mathbb{A}\right) U = F$ admits at least one solution $U \in \mathscr{D}(\mathbb{A})$. To this end, we aim to solve the system:
\begin{gather} \label{eq:28}
\left\{
\begin{array}{l}
u - v = f \\
v - (qu')' + (\sigma u'')'' = g \\
w + \tau^{-1} w' = h.
\end{array}
\right.
\end{gather}
The equations $\eqref{eq:28}_1, \, \eqref{eq:28}_3$ are equivalent to:
\begin{gather} \label{eq:29}
\left\{
\begin{array}{>\displaystyle l}
v = u - f \\
w(s) = \left(u(1) - f(1)\right)e^{-\tau s} + \tau \int_0^s e^{(r-s)\tau} h(r) dr.
\end{array}
\right.
\end{gather}
In view of \eqref{eq:29}, the solution to \eqref{eq:28} is entirely determined by the knowledge of $u$. Consequently, $u$ satisfies the following boundary value problem:
\begin{gather} \label{eq:30}
\left\{
\begin{array}{l}
(\sigma u'')'' - (qu')' + u = f + g \\
-(\sigma u'')'(1) + q(1) u'(1) + \Lambda_1 u(1) = \Lambda_3(f, h) \\
\sigma(1) u''(1) + \Lambda_2 u'(1) = \Lambda_4(f),
\end{array}
\right.
\end{gather}
where the constants $\Lambda_1, \, \Lambda_2, \, \Lambda_3(f,h)$ and $\Lambda_4(f)$ are defined as follows:
\begin{gather} \label{eq:31}
\begin{split}
\Lambda_1 := \kappa_v + \kappa_b + \kappa_d e^{-r}, \; \Lambda_2:= \kappa_r + \kappa_a, \; \Lambda_4(f):= \kappa_a f'(1), \\
\Lambda_3(f,h) := \Big(\kappa_v + \kappa_d e^{-r}\Big)f(1) - \kappa_d \tau \int_0^1 e^{(r-1)\tau} h(r) \, dr.
\end{split}
\end{gather}
We adopt a variational approach to solve the system \eqref{eq:30}. \newline
Let $\varphi \in H^2_{\sigma,0}(0,1)$ be a test function. Multiplying equation $\eqref{eq:30}_1$ by $\varphi$, integrating by parts over $[0, \, 1]$, and subsequently incorporating the boundary conditions yields:
\begin{align} \label{eq:32}
\mathbb{B}_1(u, \varphi) = \mathbb{L}_1(\varphi), \quad \forall \varphi \in H^2_{\sigma,0}(0,1),
\end{align}
where the bilinear form $\mathbb{B}_1$ and the linear form $\mathbb{L}_1$ are defined by:
\begin{align} \label{eq:33}
\mathbb{L}_1(\varphi) & := \int_0^1 \Big(f(x) + g(x)\Big) \varphi(x) dx + \Lambda_3(f,h) \varphi(1) + \Lambda_4(f) \varphi'(1) \\
\begin{split} \label{eq:34}
\mathbb{B}_1(u, \varphi) & := \int_0^1 \Big(\sigma(x) u''(x) \varphi''(x) + q(x)u'(x)\varphi'(x) + u(x)\varphi(x)\Big)  dx \\
& \qquad + \Lambda_1 u(1) \varphi(1) + \Lambda_2 u'(1) \varphi'(1).
\end{split}
\end{align}
Direct calculations establish that $\mathbb{B}_1$ is also continuous and coercive, and $\mathbb{L}_1$ is continuous. Consequently, the Lax-Milgram theorem guarantees the existence of a unique solution $u \in H^2_{\sigma,0}(0,1)$ to equation \eqref{eq:32}. In addition the definition $v := u - f $ implies that $v \in H_{\sigma,0}^2(0,1)$.

Conversely, we now consider the weak problem \eqref{eq:32}. By  performing integrations by parts, \eqref{eq:32} becomes:
\begin{align} \label{eq:35}
\begin{split}
	& \int_0^1 \Big( (\sigma u'')''(x) - (qu')'(x) + u(x) \Big) \varphi(x) dx + \Lambda_1 u(1) \varphi(1) + \Lambda_2 u'(1) \varphi'(1) \\
& + \Big[q(x) u'(x) \varphi(x)\Big]_0^1 + \left[\sigma(x) u''(x) \varphi'(x) - \Big( \sigma u''\Big)'(x) \varphi(x) \right]_0^1 \\
& \quad = \int_0^1 \Big(f(x) + g(x)\Big) \varphi(x) dx + \Lambda_3(f,h) \varphi(1) + \Lambda_4(f) \varphi'(1).
\end{split}
\end{align}
Selecting $\varphi \in \mathcal{D}(0,1)$, the relation \eqref{eq:35} reduces to:
\begin{gather} \label{eq:36}
\int_0^1 \Big( (\sigma u'')''(x) - (qu')'(x) + u(x) \Big) \varphi(x) dx = \int_0^1 \Big(f(x) + g(x)\Big) \varphi(x) dx.
\end{gather}
Thus, we obtain $\displaystyle (\sigma u'')'' - (qu')' + u = f+g $ almost everywhere on $(0,1)$. Since $\mathcal{D}(0,1)$ is dense in $L^2(0,1)$, it follows that $\sigma u'' \in H^2(0, 1)$ and $u$ satisfies $\eqref{eq:30}_1$. Hence, $u \in Q_{\sigma}(0,1)$.
Returning into $H_{\sigma,0}^2(0,1)$ and taking account the preceding results, the equation \eqref{eq:35} becomes:
\begin{gather} \label{eq:37}
\Big( q(1)u'(1) - (\sigma u'')'(1) + \Lambda_1 u(1) - \Lambda_3(f,h) \Big) \varphi(1)  \nonumber \\
+ \Big( \sigma(1)u''(1) + \Lambda_2 u'(1) - \Lambda_4(f) \Big) \varphi'(1) - \sigma(0) u''(0) \varphi'(0) = 0.
\end{gather}
We deduce that:
\begin{gather}  \label{eq:38}
\left\{
\begin{array}{l}
\sigma(0) u''(0) \varphi'(0) = 0 \\
\Big( q(1)u'(1) - (\sigma u'')'(1) + \Lambda_1 u(1) - \Lambda_3(f,h) \Big) \varphi(1) = 0 \\
\Big( \sigma(1)u''(1) + \Lambda_2 u'(1) - \Lambda_4(f) \Big) \varphi'(1) = 0,
\end{array}
\right.
\end{gather}
for all $\varphi \in H_{\sigma,0}^2(0,1)$. Thus, the solution $u$ satisfies conditions $\eqref{eq:30}_2$ and $\eqref{eq:30}_3$ regardless of the type of degeneracy of the function $\sigma$. Moreover, if $\sigma$ is (WD), then $u'(0) = 0$ and $\eqref{eq:38}_1$ is clearly satisfied; if $\sigma$ is (SD), then $\eqref{eq:38}_1$  implies $(\sigma u'')(0) = 0$. Therefore $u \in \in Q_{\sigma, 0}(0,1)$. The operator $I-\mathbb{A}$ is surjective.
\end{proof}

We now state the first main result of this paper.
\begin{theorem}\label{th1}
Assume that the function $\sigma$ is (WD) or (SD). Under the assumptions \eqref{eq:4} and \eqref{eq:24a}, the Cauchy problem \eqref{eq:24} admits a unique solution $U \in C\left( [0, \, +\infty); \, \mathscr{H}\right)$ if the initial datum $U_0 \in \mathscr{H}$. Moreover, if $U_0 \in \mathscr{D}\left(\mathbb{A}\right)$, then $U \in C\left( [0, \, +\infty); \, \mathscr{D}\left(\mathbb{A}\right) \right) \cap C^1\left([0, \, +\infty); \, \mathscr{H}\right)$.
\end{theorem}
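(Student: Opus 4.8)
The plan is to invoke the Lümer--Phillips theorem, whose hypotheses have been entirely prepared by Proposition~\ref{p1}. Indeed, Proposition~\ref{p1} establishes that the operator $\mathbb{A}$ is $m$-dissipative on $\mathscr{H}$: it is dissipative (estimate \eqref{eq:27}) and $I - \mathbb{A}$ is surjective. Combined with the density of $\mathscr{D}(\mathbb{A})$ in $\mathscr{H}$ noted right after \eqref{eq:23}, these are exactly the conditions under which Lümer--Phillips guarantees that $\mathbb{A}$ generates a $C_0$-semigroup of contractions $\left(e^{t\mathbb{A}}\right)_{t \geqslant 0}$ on $\mathscr{H}$.

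First I would recall, for completeness, that $m$-dissipativity of a densely defined operator is equivalent to the range condition $\mathrm{Ran}(\lambda I - \mathbb{A}) = \mathscr{H}$ for some (hence all) $\lambda > 0$; Proposition~\ref{p1} verifies this for $\lambda = 1$, and the dissipativity inequality \eqref{eq:27} then upgrades this to all $\lambda > 0$ via the standard resolvent estimate $\lVert (\lambda I - \mathbb{A})^{-1} \rVert \leqslant \lambda^{-1}$. Having met the hypotheses, the semigroup generation theorem of Lümer--Phillips (see \cite{pazy1983, engel2000}) applies directly.

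Then I would translate the abstract semigroup conclusions into the two regularity statements of the theorem via the standard Cauchy-problem correspondence. For $U_0 \in \mathscr{H}$, the \emph{mild} solution $U(t) = e^{t\mathbb{A}} U_0$ lies in $C\left([0,+\infty); \mathscr{H}\right)$ by strong continuity of the semigroup, and uniqueness follows from the semigroup property. For $U_0 \in \mathscr{D}(\mathbb{A})$, the classical semigroup regularity theorem asserts that $t \mapsto e^{t\mathbb{A}} U_0$ is the unique \emph{classical} solution: it remains in $\mathscr{D}(\mathbb{A})$ for all $t$, is continuously differentiable into $\mathscr{H}$, and satisfies $\dot{U}(t) = \mathbb{A}U(t)$ pointwise; this yields $U \in C\left([0,+\infty); \mathscr{D}(\mathbb{A})\right) \cap C^1\left([0,+\infty); \mathscr{H}\right)$.

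Since all the analytic work is concentrated in Proposition~\ref{p1}, this theorem is essentially a citation of Lümer--Phillips together with the bookkeeping that matches the abstract regularity classes to the claimed function spaces; there is no genuine obstacle remaining. The only point deserving care is ensuring that the domain $\mathscr{D}(\mathbb{A})$ is equipped with its graph norm when stating $U \in C\left([0,+\infty); \mathscr{D}(\mathbb{A})\right)$, so that continuity into $\mathscr{D}(\mathbb{A})$ is meaningful and consistent with the semigroup theory; this is standard and requires no new estimate.
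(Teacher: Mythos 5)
Your proposal is correct and follows essentially the same route as the paper: both deduce generation of a $C_0$-semigroup of contractions from the $m$-dissipativity established in Proposition~\ref{p1} via the L\"umer--Phillips theorem, and then read off the two regularity statements from standard semigroup theory for the abstract Cauchy problem. Your version merely spells out the mild/classical solution bookkeeping in more detail than the paper does.
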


\begin{proof}
Applying Proposition \ref{p1}, we establish the $m-$dissipativity of the linear operator $\mathbb{A}$. This, in turn, guarantees, via the L\"umer-Phillips theorem, that $\mathbb{A}$ generates a $C_0-$semigroup of contractions on $\mathscr{H}$. We then obtain the existence and uniqueness of the solution using Hille-Yosida theorem.
\end{proof}

\section{Asymptotic behaviour of the solution}

In this section, we leverage a combined approach employing the construction of a Lyapunov functional and the energy multiplier method \cite{salhi2025, siriki2025, camasta2024, camasta2025, akil2025} to demonstrate that the solution to system \eqref{eq:1} is uniformly exponentially stable. \newline
Throughout the following analysis, we assume that the conditions \eqref{eq:4} and \eqref{eq:24a} are satisfied.

	\subsection{Dissipativity of the energy}
Inspired by anterior works \cite{salhi2025, siriki2025, dadfarnia2004, ledkim2023}, the construction of the Lyapunov functional relies on the determination of the system's energy $E(t)$, which shall be non-increasing over time.	\newline
For a regular solution $u$, we define the energy of system \eqref{eq:1} is given by:
\begin{align} \label{eq:39}
\begin{split}
E(t)  & := \frac{1}{2} \Bigg[ \int_0^1 \Big( u_t^2(x,t) + \sigma(x) u_{xx}^2(x,t) + q(x) u_x^2(x,t) \Big) \, dx + \gamma \tau \int_0^1 w^2(s,t) \, ds \\
& \qquad \quad +  \kappa_b u^2(1,t) + \kappa_r u_x^2(1,t)\Bigg],
\end{split}
\end{align}
for all $t \geqslant 0$, where $\gamma$ satisfies the relation \eqref{eq:24a}.
\begin{proposition} \label{p2}
	Let $u$ be a regular solution of problem \eqref{eq:1}, where (WS) or (SD) holds. The energy $E(t)$, defined in \eqref{eq:39}, is dissipative and satisfies
the following estimates:
\begin{align} \label{eq:40}
	\dfrac{d}{dt}E(t) & \leqslant - C_{\kappa_v, \kappa_d, \kappa_a}^{\gamma} \Big(u_t^2(1, t) + u_t^2(1, t-\tau) + u_{xt}^2(1,t) \Big),
\end{align}
where the constants $C_{\kappa_v, \kappa_d, \kappa_a}^{\gamma}$ are non-negative and given by:
\begin{align} \label{eq:41}
C_{\kappa_v, \kappa_d, \kappa_a}^{\gamma} & := \min \Big\{\kappa_a; \, \dfrac{\gamma - \left|\kappa_d \right|}{2}; \, \kappa_v - \dfrac{\gamma + \left|\kappa_d \right|}{2} \Big\}.
\end{align}
\end{proposition}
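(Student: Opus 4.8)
The plan is to exploit the fact that the energy \eqref{eq:39} is precisely half the squared norm of the state in $\mathscr{H}$. Indeed, writing $U(t) = (u(\cdot,t), u_t(\cdot,t), w(\cdot,t))$ with $w(s,t) = u_t(1,t-s\tau)$, comparison of \eqref{eq:39} with the inner product \eqref{eq:21} shows that $E(t) = \tfrac{1}{2}\prec U(t), U(t)\succ$. Since $U$ solves $\dot U = \mathbb{A}U$ and the inner product is symmetric and bilinear, differentiating gives the energy identity $\frac{d}{dt}E(t) = \prec \dot U(t), U(t)\succ = \prec \mathbb{A}U(t), U(t)\succ$, which is exactly the quantity computed in Proposition \ref{p1}. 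The estimate \eqref{eq:40} then follows immediately from \eqref{eq:26}–\eqref{eq:27} after identifying the traces $w(0,t) = u_t(1,t)$, $w(1,t) = u_t(1,t-\tau)$, and $v'(1) = u_{xt}(1,t)$, and bounding each of the three nonnegative coefficients from below by their minimum $C_{\kappa_v,\kappa_d,\kappa_a}^{\gamma}$.

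For a self-contained derivation valid for regular solutions, I would instead differentiate $E(t)$ directly under the integral sign, substitute the equation $u_{tt} = (q u_x)_x - (\sigma u_{xx})_{xx}$ into the term $\int_0^1 u_t u_{tt}\,dx$, and integrate by parts (once for the axial term, twice for the flexural term). The interior contributions $-\int_0^1 \sigma u_{xx}u_{xxt}\,dx$ and $-\int_0^1 q u_x u_{xt}\,dx$ produced this way cancel against the corresponding terms already present in $\frac{d}{dt}E(t)$, leaving only boundary contributions at $x=0$ and $x=1$. At $x=0$ all boundary terms vanish: $u(0,t)=0$ forces $u_t(0,t)=0$, while the operator condition $\mathcal{B}u(0,t)=0$ kills the remaining trace in both the (WD) case ($u_x(0,t)=0$, hence $u_{xt}(0,t)=0$) and the (SD) case ($(\sigma u_{xx})(0,t)=0$). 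For the delay integral I would use the transport equation $\tau w_t = -w_s$ to write $\gamma\tau\int_0^1 w w_t\,ds = -\tfrac{\gamma}{2}[w^2]_0^1 = \tfrac{\gamma}{2}w^2(0,t) - \tfrac{\gamma}{2}w^2(1,t)$.

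At $x=1$ the two feedback conditions $\eqref{eq:19}_4$ and $\eqref{eq:19}_5$ are the crux: substituting $\sigma(1)u_{xx}(1,t) = -\kappa_r u_x(1,t) - \kappa_a u_{xt}(1,t)$ cancels the $\kappa_r$ boundary term of the energy and leaves $-\kappa_a u_{xt}^2(1,t)$, while substituting $(\sigma u_{xx})_x(1,t)$ cancels the $q(1)u_x u_t$ and $\kappa_b u u_t$ contributions and leaves $-\kappa_v w^2(0,t) - \kappa_d w(0,t)w(1,t)$. Collecting everything reproduces \eqref{eq:26}; applying Young's inequality to $-\kappa_d w(0,t)w(1,t)$ and invoking \eqref{eq:24a} to guarantee positivity of the coefficients yields \eqref{eq:27}, and hence \eqref{eq:40}–\eqref{eq:41}. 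The only delicate points I anticipate are the careful bookkeeping of signs through the repeated integration by parts of the fourth-order flexural term and the case distinction ensuring the $x=0$ boundary terms vanish for both degeneracy types; both are routine once the boundary conditions are invoked in the correct order. I would also note that the nonnegativity of $C_{\kappa_v,\kappa_d,\kappa_a}^{\gamma}$ is a direct consequence of $\kappa_a>0$ together with $|\kappa_d| < \gamma < 2\kappa_v - |\kappa_d|$ from \eqref{eq:24a}.
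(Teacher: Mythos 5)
Your proposal is correct and your ``self-contained'' derivation is essentially identical to the paper's proof: multiply $\eqref{eq:19}_1$ by $u_t$ and $\eqref{eq:19}_2$ by $w$, integrate by parts (the paper's \eqref{eq:43} and \eqref{eq:45}), substitute the boundary feedback laws so that the $\kappa_r u_x u_{xt}(1,t)$ and $\kappa_b u u_t(1,t)$ terms cancel against the derivatives of the boundary terms in $E$, and finish with Young's inequality and \eqref{eq:24a}. Your opening observation --- that $E(t)=\tfrac12\prec U(t),U(t)\succ$ so that $\tfrac{d}{dt}E(t)=\prec\mathbb{A}U(t),U(t)\succ$ is already given by \eqref{eq:26}--\eqref{eq:27} --- is a valid shortcut that the paper does not exploit (it redoes the computation from scratch), and it makes transparent that Proposition \ref{p2} is just the dissipativity of $\mathbb{A}$ read along trajectories.
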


\begin{proof}
First, by multiplying equation $\eqref{eq:19}_1$ by $u_t$, and integrating it over $(0,1)$, we obtain:
\begin{gather} \label{eq:42}
\int_{0}^{1}  \Big( u_{tt} + \left( \sigma(x) u_{xx} \right)_{xx} - \left( q(x) u_x \right)_x  \Big)  u_t\, dx = 0.
\end{gather}
After applying integration by parts to the last two integral terms on the left-hand side of the preceding identity, it follows that:
\begin{align} \label{eq:43}
\begin{split}
&\frac{d}{dt} \left[ \frac{1}{2} \int_{0}^{1} \Big( u_t^2 + \sigma(x) u_{xx}^2 + q(x) u_{x}^2 \Big)\, dx \right] + \Big( (\sigma u_{xx})_x - q(1) u_x \Big)(1,t) u_t(1,t) \\
& \quad - \sigma(1) u_{xx}(1,t) u_{xt}(1,t) = 0.
\end{split}
\end{align}
Second, by multiplying equation $\eqref{eq:19}_2$ by $w$, and integrating it over $(0,1)$, we obtain:
\begin{gather} \label{eq:44}
\int_{0}^{1} w_s(s,t) w(s,t) \, ds + \tau \int_{0}^{1} w_t(s,t) w(s,t) \, ds = 0 .
\end{gather}
Integrating by parts the first integral term on the left-hand side of the preceding identity, we get:
\begin{gather} \label{eq:45}
\frac{d}{dt}\left( \frac{\tau}{2} \int_{0}^{1}  w^2(s, t) \, ds \right) + \frac{1}{2} \Big( w^2(1,t) - w^2(0,t) \Big) = 0.
\end{gather}
Summing the identities \eqref{eq:43} and \eqref{eq:45}, and incorporating the boundary conditions $\eqref{eq:19}_4-\eqref{eq:19}_7$, the following holds:
\begin{align} \label{eq:46}
\begin{split}
& \frac{d}{dt} \left[ \frac{1}{2} \left( \int_{0}^{1} \Big( u_t^2 + \sigma(x) u_{xx}^2 + q(x) u_{x}^2 \Big)\, dx + \gamma \tau \int_{0}^{1}  w^2 \, ds + \kappa_r u_x^2(1,t) + \kappa_b u^2(1,t) \right) \right]   \\
& \qquad = -\kappa_d  u_t(1,t-\tau) u_t(1,t) - \kappa_v u_t^2(1,t) - \kappa_a u_{xt}^2(1,t) - \frac{\gamma}{2} \Big( u_t^2(1,t-\tau) - u_t^2(1,t) \Big).
\end{split}
\end{align}
Furthermore, applying Young's inequality and using the condition \eqref{eq:24a} yield:
\begin{gather}
\frac{d}{dt} E(t) \leqslant -\left(\kappa_v - \frac{\gamma + \left|\kappa_d\right|}{2}\right) u_t^2(1,t) - \frac{\gamma - \left|\kappa_d\right|}{2} u_t^2(1,t-\tau) - \kappa_a \left(v'(1)\right)^2 \leqslant 0.
\end{gather}
Finally, we obtain \eqref{eq:40}.
\end{proof}

	\subsection{Exponential stability of the delayed system}

Consider the following Lyapunov function:
\begin{gather} \label{eq:47}
L(t) := E(t) + \varepsilon G(t), \quad t \geq 0,
\end{gather}
where $\varepsilon > 0$ is a parameter, whose value shall be specified to be sufficiently small later subsequently.
In addition, the functional $G$ is given by:
\begin{gather}
G(t) := \int_0^1 u_t(x,t) \Big( 2x u_x(x,t) + \frac{\Upsilon_{\sigma, \, q}}{2} u(x,t) \Big) dx + \gamma \tau \int_0^1 e^{-2\tau s} u_t^2(1, t-\tau s) ds
\end{gather}	\label{eq:48}
where $\gamma$ satisfying \eqref{eq:24a} and the positive constant $\Upsilon_{\sigma, q}$ is defined by:
\begin{gather} \label{eq:49}
\Upsilon_{\sigma, q} := \max\left\{ K_\sigma, \frac{q_2}{q_0}\right\} < 2.
\end{gather}
First, the following result demonstrates the equivalence of the norms induced by the Lyapunov functional $L$ and the energy functional $E$.
\begin{proposition} \label{p3}
Suppose that the function $\sigma$ satisfies either (WD) or (SD). For $\varepsilon > 0$ small enough, there are two positive constants $\Theta_1, \, \Theta_2$ such that:
\begin{gather} \label{eq:50}
\Theta_1 E(t) \leqslant L(t) \leqslant \Theta_2 E(t),
\end{gather}
where $\displaystyle \Theta_1 \text{ and } \, \Theta_2$ are given by:
\begin{gather} \label{eq:51}
	\Theta_1 := 1 - \varepsilon C_{\Upsilon}, \quad \Theta_2 := 1 + \varepsilon C_{\Upsilon}
\end{gather}
where the positive constant $C_{\Upsilon}$ is defined by:
\begin{gather} \label{eq:52}
C_{\Upsilon} := 2 \max \Big\{1; \, 1 + \dfrac{\Upsilon_{\sigma, q}}{4}; \, \dfrac{1}{q_0} \left(1 + \dfrac{\Upsilon_{\sigma, q}}{8} \right) \Big\}.
\end{gather}
\end{proposition}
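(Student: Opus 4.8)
The plan is to reduce the two-sided bound \eqref{eq:50} to the single estimate $|G(t)| \leqslant C_\Upsilon\, E(t)$. Indeed, since $L(t) = E(t) + \varepsilon G(t)$ by \eqref{eq:47}, once such a bound is available we immediately obtain
\begin{gather*}
(1 - \varepsilon C_\Upsilon) E(t) \leqslant E(t) - \varepsilon |G(t)| \leqslant L(t) \leqslant E(t) + \varepsilon |G(t)| \leqslant (1 + \varepsilon C_\Upsilon) E(t),
\end{gather*}
which is precisely \eqref{eq:50} with $\Theta_1, \Theta_2$ as in \eqref{eq:51}; the smallness requirement on $\varepsilon$ is then simply $\varepsilon < 1/C_\Upsilon$, which guarantees $\Theta_1 > 0$.

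To estimate $|G(t)|$ I would split $G$ into its three constituent pieces coming from \eqref{eq:48}: the cross term $\int_0^1 2x\, u_t u_x\,dx$, the cross term $\tfrac{\Upsilon_{\sigma,q}}{2}\int_0^1 u_t u\,dx$, and the weighted memory term $\gamma\tau\int_0^1 e^{-2\tau s} u_t^2(1,t-\tau s)\,ds$. The goal is to control each piece by one (or two) of the manifestly non-negative building blocks of $2E(t)$, namely $\int_0^1 u_t^2\,dx$, $\int_0^1 q u_x^2\,dx$, $\int_0^1 \sigma u_{xx}^2\,dx$, $\gamma\tau\int_0^1 w^2\,ds$, $\kappa_b u^2(1)$ and $\kappa_r u_x^2(1)$ (recall \eqref{eq:39}).

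For the first cross term I would apply Young's inequality together with $x \in [0,1]$ to get $2x|u_t u_x| \leqslant u_t^2 + u_x^2$, and then bound $\int_0^1 u_x^2\,dx \leqslant q_0^{-1}\int_0^1 q u_x^2\,dx$ using $q \geqslant q_0$. For the second cross term, Young's inequality gives $\tfrac{\Upsilon_{\sigma,q}}{2}|u_t u| \leqslant \tfrac{\Upsilon_{\sigma,q}}{4}\big(u_t^2 + u^2\big)$, after which Remark \ref{2r1}, and specifically the chain \eqref{eq:11}, converts $\int_0^1 u^2\,dx$ into a multiple of $\int_0^1 q u_x^2\,dx$. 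For the memory term I would exploit the identity $w(s,t) = u_t(1,t-s\tau)$ from \eqref{eq:17} together with the elementary bound $e^{-2\tau s} \leqslant 1$ on $(0,1)$ to obtain $\gamma\tau\int_0^1 e^{-2\tau s} u_t^2(1,t-\tau s)\,ds \leqslant \gamma\tau\int_0^1 w^2\,ds$; this piece being non-negative also supplies the trivial lower bound. Collecting the coefficients that multiply each non-negative block and then bounding the sum of those blocks by $2E(t)$ yields $|G(t)| \leqslant C_\Upsilon E(t)$ with $C_\Upsilon$ as in \eqref{eq:52}, the overall factor $2$ arising exactly from replacing $\sum(\text{blocks})$ by $2E(t)$.

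I expect no analytic obstruction: $G$ carries no second-derivative term, so the degenerate block $\int_0^1 \sigma u_{xx}^2\,dx$ is simply discarded as a non-negative surplus, and the Poincaré-type inequalities of Remark \ref{2r1} hold uniformly in both the (WD) and (SD) cases, making the argument insensitive to the type of degeneracy of $\sigma$. The only genuinely delicate part is the bookkeeping: tracking which cross term feeds into which energy block and with what weight, so that the resulting maximum reproduces \eqref{eq:52} exactly and no contribution is left uncontrolled.
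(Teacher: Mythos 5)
Your proposal is correct and follows essentially the same route as the paper: Young's inequality on the two cross terms, the Poincaré-type bounds of Remark~\ref{2r1} to absorb $\int_0^1 u_x^2\,dx$ and $\int_0^1 u^2\,dx$ into the $\int_0^1 q\,u_x^2\,dx$ block, the bound $e^{-2\tau s}\leqslant 1$ on the memory term, and a final comparison of the block coefficients with $2E(t)$. The only detail to watch is that reproducing the coefficient $\tfrac{1}{q_0}\left(1+\tfrac{\Upsilon_{\sigma,q}}{8}\right)$ in \eqref{eq:52} exactly requires the sharpened Poincaré inequality $\lVert u\rVert_{L^2(0,1)}^2 \leqslant \tfrac{1}{2}\lVert u'\rVert_{L^2(0,1)}^2$ (which the paper invokes in the proof of Proposition~\ref{p4}) rather than the cruder chain \eqref{eq:11} you cite, which would only yield $\tfrac{1}{q_0}\left(1+\tfrac{\Upsilon_{\sigma,q}}{4}\right)$ --- still sufficient for the qualitative conclusion, but not for the stated constant.
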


\begin{proof}
Using Young's inequality, we obtain the following inequalities:
\begin{align} \label{eq:53}
\begin{split}
\left| \int_0^1 x u_t(x,t) u_x(x,t) dx \right| & \leqslant \frac{1}{2} \left( \lVert u_t(\cdot,t)\rVert_{L^2(0,1)}^2  + \frac{1}{q_0} \lVert \sqrt{q} u_x(\cdot,t)\rVert_{L^2(0,1)}^2  \right)
\end{split}
\end{align}
\begin{gather} \label{eq:54}
\left| \int_0^1 u_t(x,t) u(x,t) dx \right| \leqslant \frac{1}{2} \lVert u_t(\cdot,t)\rVert_{L^2(0,1)}^2 + \frac{1}{4 q_0} \lVert \sqrt{q} u_x(\cdot,t)\rVert_{L^2(0,1)}^2.
\end{gather}
Therefore, using the triangle inequality and relations \eqref{eq:53} and \eqref{eq:54} yields:
\begin{align} \label{eq:55}
\left|G(t)\right| & \leqslant \left( 1 + \frac{\Upsilon_{\sigma, q}}{4}\right)\lVert u_t(\cdot,t)\rVert_{L^2(0,1)}^2 + \frac{1}{q_0} \left(1 + \frac{\Upsilon_{\sigma, q}}{8} \right) \lVert \sqrt{q} u_x(\cdot,t)\rVert_{L^2(0,1)}^2 \nonumber \\
&	\quad + \gamma \tau \int_0^1 u_t^2(1, t-\tau s) ds \nonumber \\
\left|G(t)\right| & \leqslant C_{\Upsilon} E(t),
\end{align}
with the constant $ \displaystyle C_{\Upsilon}$ defined in \eqref{eq:52}.
Applying the triangle inequality once more to the preceding expression, we get the desired result for $0 < \varepsilon < C_{\Upsilon}^{-1}$.
\end{proof}

Before proceeding, we establish the following proposition, which is fundamental to proving the stabilization of problem \eqref{eq:1}.
\begin{proposition} \label{p4}
 Suppose that $\sigma$ is (WD) or (SD). Define:
\begin{gather} \label{eq:56}
|||y|||^2 := \int_0^1 \sigma(x) (y''(x))^2 \, dx + \int_0^1 q(x) (y'(x))^2 \, dx + \kappa_b (y(1))^2 + \kappa_r (y'(1))^2,
\end{gather}
for all $y \in H^2_{\sigma,0}(0,1)$. 
The norms $|||\cdot|||, \text{ and } \, \lVert \, \cdot  \rVert_{2, \sigma}$ 
are equivalent on $H^2_{\sigma,0}(0,1)$. In addition, for all $\lambda, \mu \in \mathbb{R}$, the variational problem:
\begin{gather} \label{eq:57}
\int_0^1 \sigma(x) y''(x)\varphi''(x) dx + \int_0^1 q(x) y'(x)\varphi'(x) dx + \kappa_b y(1)\varphi(1) + \kappa_r y'(1) \varphi'(1) = \lambda \varphi(1) + \mu \varphi'(1), 
\end{gather}
for all $\varphi \in H^2_{\sigma,0}(0,1)$, admits a unique solution $y \in H^2_{\sigma,0}(0,1)$, which satisfies the following estimates:
\begin{gather} \label{eq:58}
\lVert y \rVert^2_{L^2(0,1)} \leqslant q_0^{-1} C_{\lambda, \mu}^2 \quad \text{ and } \quad ||| y |||^2 \leqslant C_{\lambda, \mu}^2
\end{gather}
where the constant $C_{\lambda, \mu}$ is defined by:
\begin{gather} \label{eq:59}
C_{\lambda, \mu} := |\lambda| \sqrt{\dfrac{1}{q_0}} + |\mu| C_1, \quad \text{ with } \quad C_1 := \sqrt{ 2 \max \Big\{ \dfrac{1}{q_0}; \, \dfrac{1}{\sigma(1)(2 - K_\sigma)} \Big\} }.
\end{gather}
Moreover $y \in \mathcal{D}\left( A_\sigma \right) := Q_{\sigma, 0}(0, 1)$ verifies the following system:
\begin{gather} \label{eq:60}
\left\{
\begin{array}{l}
A_\sigma y = 0, \\
q(1) y'(1) - (\sigma y'')'(1) + \kappa_b y(1) = \lambda, \qquad \text{ with } A_\sigma y := (\sigma y'')'' - (q y')'.\\
\sigma(1) y''(1) + \kappa_r y'(1) = \mu,
\end{array}
\right.
\end{gather}
\end{proposition}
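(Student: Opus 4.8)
The plan is to treat the four assertions in order, taking the norm equivalence as the foundation on which the remaining claims rest. For the equivalence of $|||\cdot|||$ and $\lVert\cdot\rVert_{2,\sigma}$ on $H^2_{\sigma,0}(0,1)$, I would first prove the upper bound $|||y|||^2 \leqslant C\lVert y\rVert_{2,\sigma}^2$ by controlling the two boundary contributions: since $y(0)=0$ gives $|y(1)|\leqslant\lVert y'\rVert_{L^2(0,1)}$ by Cauchy–Schwarz, $|y'(1)|$ is bounded through \eqref{eq:12} of Remark \ref{2r1}, and $\lVert\sqrt{q}\,y'\rVert_{L^2(0,1)}^2\leqslant q_1\lVert y'\rVert_{L^2(0,1)}^2$ handles the axial term. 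For the reverse inequality I would discard the nonnegative boundary terms (recall $\kappa_b,\kappa_r\geqslant0$), use $q(x)\geqslant q_0$ to get $q_0\lVert y'\rVert^2\leqslant\lVert\sqrt{q}\,y'\rVert^2$, and invoke the Poincaré-type chain \eqref{eq:11} to absorb $\lVert y\rVert_{L^2}^2$, yielding $\lVert y\rVert_{2,\sigma}^2\leqslant\max\{1,2/q_0\}\,|||y|||^2$.

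Second, I would solve \eqref{eq:57} via the Lax–Milgram theorem. The key observation is that the bilinear form on the left of \eqref{eq:57} is exactly the symmetric form whose associated norm is $|||\cdot|||$, so coercivity on $H^2_{\sigma,0}(0,1)$ is precisely the lower norm estimate established above, while its continuity follows from Cauchy–Schwarz together with the trace bounds for $\varphi(1)$ and $\varphi'(1)$. The functional $\varphi\mapsto\lambda\varphi(1)+\mu\varphi'(1)$ is continuous for the same trace reasons. Lax–Milgram then delivers a unique $y\in H^2_{\sigma,0}(0,1)$ solving \eqref{eq:57}.

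Third, to obtain the estimates \eqref{eq:58} I would test \eqref{eq:57} with $\varphi=y$, which gives $|||y|||^2=\lambda y(1)+\mu y'(1)$. Bounding $|y(1)|\leqslant q_0^{-1/2}|||y|||$ and $|y'(1)|\leqslant C_1|||y|||$ (again through \eqref{eq:11} and \eqref{eq:12}, with $C_1$ as in \eqref{eq:59}) yields $|||y|||^2\leqslant C_{\lambda,\mu}|||y|||$, whence $|||y|||\leqslant C_{\lambda,\mu}$; the $L^2$ bound $\lVert y\rVert_{L^2}^2\leqslant q_0^{-1}C_{\lambda,\mu}^2$ then follows at once from \eqref{eq:11}.

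Finally, I would recover the strong formulation \eqref{eq:60} by the same bootstrap used in the maximality argument of Proposition \ref{p1}: testing \eqref{eq:57} against $\varphi\in\mathcal D(0,1)$ forces $(\sigma y'')''-(qy')'=0$ distributionally, so $\sigma y''\in H^2(0,1)$ and $y\in Q_\sigma(0,1)$; returning to a general $\varphi\in H^2_{\sigma,0}(0,1)$ and integrating by parts, the interior integral cancels and matching the coefficients of $\varphi(1)$, $\varphi'(1)$ and $\varphi'(0)$ produces the two boundary conditions of \eqref{eq:60}. I expect the main obstacle to be precisely this last step in the degenerate regime: the boundary term $(\sigma y'')(0)\,\varphi'(0)$ only makes sense because the $Q_\sigma$-regularity guarantees $\sigma y''$ is continuous up to $x=0$, and the two cases must be separated—in (WD) the constraint $\varphi'(0)=0$ is built into $H^2_{\sigma,0}(0,1)$ so the term drops and the inherited condition $y'(0)=0$ places $y$ in $Q_{\sigma,0}(0,1)$, whereas in (SD) the trace $\varphi'(0)$ is free, forcing $(\sigma y'')(0)=0$ and hence $y\in Q_{\sigma,0}(0,1)$.
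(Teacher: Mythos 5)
Your proposal is correct and follows essentially the same route as the paper: norm equivalence via the inequalities \eqref{eq:11}--\eqref{eq:12}, Lax--Milgram for the weak problem, testing with $\varphi=y$ to obtain $|||y|||^2=\lambda y(1)+\mu y'(1)$ and hence \eqref{eq:58}, and recovery of the strong form \eqref{eq:60} by first testing against $\mathcal{D}(0,1)$ and then separating the (WD) and (SD) cases at the boundary term $\sigma(0)y''(0)\varphi'(0)$. The only differences are immaterial constants (e.g.\ your $\max\{1,2/q_0\}$ versus the paper's $\max\{1,3/(2q_0)\}$ in the lower norm bound), and your explicit insertion of the factor $q_1$ in the upper bound is in fact slightly more careful than the paper's.
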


\begin{proof}
Let $y \in H_{\sigma,0}^2(0,1)$. First, by using inequalities \eqref{eq:11} and \eqref{eq:12}, we have:
\begin{align}
||| y |||^2 \, & \leqslant \, \left( 1 + \frac{\kappa_b + 2 \kappa_r}{q_0} \right) \left\lVert \sqrt{q} y' \right\rVert^2 + \left( 1 + \frac{2 \kappa_r}{\sigma(1)(2-K_\sigma)} \right) \left\lVert \sqrt{\sigma} y'' \right\rVert^2 \nonumber \\
||| y |||^2 \, & \leqslant \, \left( 1 + \max \left\{\frac{\kappa_b + 2 \kappa_r}{q_0}; \, \frac{2 \kappa_r}{\sigma(1)(2-K_\sigma)}\right\} \right) \left\lVert \, y \right\rVert_{2, \sigma}^2. \label{eq:60a}
\end{align}
Second, by using inequality \eqref{eq:11}, it is easy to see that: 
\[	
	\lVert y \rVert^{2}_{L^2(0,1)} \leqslant \frac{1}{2} \left\lVert y' \right\rVert^{2}_{L^2(0,1)}. 
\]
Consequently, by using \eqref{eq:12}, it follows that:
\begin{align}
\lVert y \rVert^{2}_{2, \sigma} & \leqslant \frac{3}{2} \left\lVert y' \right\rVert^2 + \left\lVert \sqrt{\sigma} y'' \right\rVert^2 \nonumber \\
& \leqslant \frac{3}{2 q_0} \left\lVert \sqrt{q} y' \right\rVert^2 + \left\lVert \sqrt{\sigma}  y'' \right\rVert^2 \nonumber \\
\lVert y \rVert^{2}_{2, \sigma} & \leqslant \max \left\{1; \, \frac{3}{2q_0}\right\} ||| y |||^2. \label{eq:60b}
\end{align}
Thus, the estimates \eqref{eq:60a} and \eqref{eq:60b} ensure that the norms $\lVert \cdot \rVert_{2, \sigma}$ and $||| \cdot |||$ are equivalent on $H^2_{\sigma,0}(0,1)$.

Let $\varphi \in H_{\sigma,0}^2(0,1)$ be a test function. Multiplying the equation $\eqref{eq:60}_1$ by $\varphi$ and performing integration by parts twice while incorporating the boundary conditions $\eqref{eq:60}_2$ and $\eqref{eq:60}_3$, we immediately derive the weak formulation:
\begin{gather} \label{eq:61}
\chi (y, \varphi) = \Psi(\varphi), \quad \forall \varphi \in H_{\sigma,0}^2(0,1),
\end{gather}
where the applications $\Psi$ and $\chi$ are defined as follows:
\begin{gather} \label{eq:62}
\Psi(\varphi) := \lambda \varphi(1) + \mu \varphi'(1), \\
\chi (y, \varphi) := \int_{0}^{1} \sigma y'' \varphi'' dx + \int_{0}^{1} q y' \varphi' \, dx + \kappa_b y(1) \varphi(1)  + \kappa_r y'(1) \varphi'(1). \label{eq:63}
\end{gather}
A direct calculation shows that the bilinear form $\chi$ is continuous and coercive, and that the linear form $\Psi$ is also continuous. Applying the Lax-Milgram Theorem, the variational problem \eqref{eq:61} admits a unique solution $y \in H_{\sigma,0}^2(0,1)$; in particular, we have:
\begin{align} \label{eq:63a}
\begin{split}
|||y|||^2 & = \int_0^1 \Big( \sigma(x)(y''(x))^2 + q(x) (y'(x))^2 \Big) \, dx + \kappa_b (y(1))^2 + \kappa_r (y'(1))^2 \\
& = \lambda y(1) + \mu y'(1).
\end{split}
\end{align}
 Using the inequalities \eqref{eq:11} and \eqref{eq:12}, it follows that:
\begin{align} \label{eq:64}
|||y|||^2 & \leqslant C_{\lambda, \mu} \, |||y|||.
\end{align}
where $\displaystyle C_{\lambda, \mu}$ is defined in \eqref{eq:59}.
In addition, we have:
\begin{align*}
\lVert y \rVert_{L^2(0,1)}^2 \leqslant \frac{1}{q_0} |||y|||^2 \leqslant \frac{1}{q_0} C_{\lambda, \mu}^2.
\end{align*}
Then \eqref{eq:58} holds.

Conversely, considering $y$ as the solution to the weak problem \eqref{eq:61}, and performing two successive integrations by parts, the resulting expression simplifies to:
\begin{align} \label{eq:65}
\begin{split}
& \int_{0}^{1} \Big( (\sigma y'')'' - (qy')' \Big) \varphi \, dx + \Big( -(\sigma y'')'(1) + q(1) y'(1) + \kappa_b y(1) \Big) \varphi(1)  \\
& + \Big( \kappa_r y'(1) + \sigma(1) y''(1) \Big) \varphi'(1) - \sigma(0) y''(0) \varphi'(0) = \lambda \varphi(1) + \mu \varphi'(1).
\end{split}
\end{align}
Selecting $\varphi \in \mathcal{D}(0,1)$, the expression \eqref{eq:65} becomes:
\begin{align} \label{eq:66}
\begin{split}
\int_{0}^{1} \Big( (\sigma(x) y'')'' - (q(x)y')' \Big) \varphi \, dx = 0.
\end{split}
\end{align}
Consequently, $\displaystyle (\sigma y'')'' - (q y')' = 0$ almost everywhere (a.e.) in $(0,1)$. By the density of $\mathcal{D}(0,1)$ in $L^2(0,1)$, the identity $\eqref{eq:60}_1$ holds. This establishes that $\sigma y'' \in H^2(0,1)$, which yields the regularity $y \in Q_{\sigma}(0,1)$.\newline
On the other hand, considering $\varphi \in H_{\sigma,0}^2(0,1)$, it follows from the preceding that:
\begin{align} \label{eq:67}
\begin{split}
& \Big( -(\sigma y'')'(1) + q(1) y'(1) + \kappa_b y(1) \Big) \varphi(1)   + \Big( \kappa_r y'(1) + \sigma(1) y''(1) \Big) \varphi'(1) \\
& \quad - \sigma(0) y''(0) \varphi'(0) = \lambda \varphi(1) + \mu \varphi'(1).
\end{split}
\end{align}
By identification, we obtain:
\begin{gather} \label{eq:68}
\left\{
\begin{array}{l}
\sigma(0) y''(0) \varphi'(0) = 0, \\
\Big( \kappa_r y'(1) + \sigma(1) y''(1) \Big) \varphi'(1) = \mu \varphi'(1), \\
\Big( -(\sigma y'')'(1) + q(1) y'(1) + \kappa_b y(1) \Big) \varphi(1) = \lambda \varphi(1),
\end{array}
\right.
\end{gather}
for all $\varphi \in H^2_{\sigma, 0}(0,1)$. Since $\varphi$ is an arbitrary test function in $H^2_{\sigma, 0}(0,1)$, the boundary conditions $\eqref{eq:60}_1$ and $\eqref{eq:60}_2$ are satisfied. 
Furthermore, if $\sigma$ is (WD), then $y'(0) = 0$ and  the relation $\eqref{eq:68}_1$ is satisfied. Otherwise, if $\sigma$ is (SD), the idendity simplifies to $\sigma(0) y''(0) = 0$. Therefore $y \in Q_{\sigma, 0}(0,1)$ and solves the boundary value problem \eqref{eq:60}.
\end{proof}

To state the second main result of this paper, we need the following lemmas.\begin{lemma} \label{l1}
Let $\sigma$ satisfy either (WD) or (SD). Then, for any regular solution $u$ of system \eqref{eq:1}, we have the following:
\begin{align} \label{eq:69}
\dfrac{d}{dt} G(t) & \leqslant -\min\Big\{ 2 - \Upsilon_{\sigma, q}; \, 4e^{-2\tau} \Big\} E(t) + \left[ \left( \dfrac{2}{\sigma(1)} + \dfrac{1}{q(1)} \left( 2+\dfrac{\Upsilon_{\sigma, q}}{2} \right)\right) \kappa_r^2  + \left( \dfrac{3}{2}+\dfrac{\Upsilon_{\sigma, q}}{4} \right)q(1) \right] u_x^2(1,t)  \nonumber \\
& \quad  + \left( \dfrac{2}{\sigma(1)} + \dfrac{1}{q(1)} \left( 2+\dfrac{\Upsilon_{\sigma, q}}{2} \right) \right) \kappa_a^2 u_{xt}^2(1,t) + \left( \dfrac{3}{q(1)}\kappa_d^2-\gamma e^{-2\tau} \right)  u_t^2(1,t-\tau) \nonumber \\
& \quad  + \left( 1 + \gamma + \dfrac{3}{q(1)} \kappa_v^2 \right) u_t^2(1,t) + \left( \dfrac{3}{q(1)} \kappa_b^2 + \dfrac{q(1)}{4} \Upsilon_{\sigma, q}^2 \right) u^2(1,t).
\end{align}
\end{lemma}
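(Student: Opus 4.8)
The plan is to differentiate the two pieces of $G$ separately, writing $G=G_1+G_2$ with $G_1(t):=\int_0^1 u_t\big(2xu_x+\tfrac{\Upsilon_{\sigma,q}}{2}u\big)\,dx$ and $G_2(t):=\gamma\tau\int_0^1 e^{-2\tau s}w^2(s,t)\,ds$ (recall $w(s,t)=u_t(1,t-\tau s)$), and to feed the governing equations \eqref{eq:19} into each derivative. For $G_1$ I would substitute $u_{tt}=(qu_x)_x-(\sigma u_{xx})_{xx}$ and integrate by parts against the multiplier $2xu_x+\tfrac{\Upsilon_{\sigma,q}}2u$; for $G_2$ I would use the transport identity $w_t=-\tfrac1\tau w_s$ from $\eqref{eq:19}_2$ and integrate by parts in $s$. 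Throughout, the boundary contributions at $x=0$ are to be discarded using the clamping $u(0)=0$ together with the degeneracy structure (the condition $u'(0)=0$ in the (WD) case and $(\sigma u'')(0)=0$ in the (SD) case built into $H^2_{\sigma,0}$ and $Q_{\sigma,0}$), exactly as in the weighted Gauss-Green formula \eqref{eq:14}.

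The heart of the estimate is the volume part of $\tfrac{d}{dt}G_1$. The mixed kinetic term satisfies $\int_0^1 2xu_tu_{xt}\,dx=u_t^2(1,t)-\int_0^1 u_t^2\,dx$, which combines with the $\tfrac{\Upsilon_{\sigma,q}}2u_t^2$ contribution to produce exactly $-(1-\tfrac{\Upsilon_{\sigma,q}}2)\int_0^1u_t^2$, i.e. $-(2-\Upsilon_{\sigma,q})$ times the kinetic energy. Repeated integration by parts of $\int_0^1 (qu_x)_x\big(2xu_x+\tfrac{\Upsilon_{\sigma,q}}2u\big)$ and of $\int_0^1(\sigma u_{xx})_{xx}\big(2xu_x+\tfrac{\Upsilon_{\sigma,q}}2u\big)$ yields, up to $x=1$ boundary data, the bulk terms $-(1+\tfrac{\Upsilon_{\sigma,q}}2)\int q u_x^2+\int_0^1 xq'u_x^2$ and $-(3+\tfrac{\Upsilon_{\sigma,q}}2)\int\sigma u_{xx}^2+\int_0^1 x\sigma' u_{xx}^2$ respectively. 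The two weight-derivative integrals are controlled by the structural bounds $\tfrac{x|q'|}{q}\le\tfrac{q_2}{q_0}\le\Upsilon_{\sigma,q}$ and $\tfrac{x|\sigma'|}{\sigma}\le K_\sigma\le\Upsilon_{\sigma,q}$ from \eqref{eq:2}, \eqref{eq:49} and the second implication after \eqref{eq:5}, so that the axial and bending volume contributions are each bounded by $-(2-\Upsilon_{\sigma,q})$ times the corresponding energy (the bending term over-delivering). For $G_2$, the $s$-integration by parts produces the dissipative bulk term $-2\gamma\tau\int_0^1 e^{-2\tau s}w^2\,ds\le-4e^{-2\tau}\cdot\tfrac12\gamma\tau\int_0^1 w^2\,ds$, using $e^{-2\tau s}\ge e^{-2\tau}$ on $[0,1]$, together with the boundary values $+\gamma u_t^2(1,t)-\gamma e^{-2\tau}u_t^2(1,t-\tau)$ coming from $w(0,t)=u_t(1,t)$ and $w(1,t)=u_t(1,t-\tau)$. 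Setting $\delta:=\min\{2-\Upsilon_{\sigma,q};4e^{-2\tau}\}$, positive since $\Upsilon_{\sigma,q}<2$ by \eqref{eq:49}, the bulk terms combine into $-\delta$ times the volume-plus-history part of the energy; transferring the boundary energy recovers the full $-\delta E(t)$ at the cost of returning $\tfrac\delta2\kappa_b u^2(1,t)+\tfrac\delta2\kappa_r u_x^2(1,t)$ to the boundary ledger.

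Finally I would substitute the two feedback boundary conditions $\eqref{eq:19}_4$ and $\eqref{eq:19}_5$, namely $\sigma(1)u_{xx}(1)=-(\kappa_r u_x(1)+\kappa_a u_{xt}(1))$ and $(\sigma u_{xx})_x(1)=q(1)u_x(1)+\kappa_v u_t(1)+\kappa_d w(1)+\kappa_b u(1)$, into the accumulated $x=1$ boundary terms $u_t^2(1)$, $q(1)u_x^2(1)$, $\pm2(\sigma u_{xx})_x(1)u_x(1)$, $\sigma(1)u_{xx}^2(1)$ and the $\tfrac{\Upsilon_{\sigma,q}}2$-terms. Young's inequality with the weights $q(1)$ and $\sigma(1)$, together with $(a+b+c)^2\le3(a^2+b^2+c^2)$ applied to $\kappa_v u_t(1)+\kappa_d w(1)+\kappa_b u(1)$, then yields the factors $\tfrac3{q(1)}\kappa_v^2$, $\tfrac3{q(1)}\kappa_d^2$, $\tfrac3{q(1)}\kappa_b^2$, while $\sigma(1)u_{xx}^2(1)=\tfrac1{\sigma(1)}(\kappa_r u_x(1)+\kappa_a u_{xt}(1))^2$ gives the $\tfrac2{\sigma(1)}\kappa_r^2$ and $\tfrac2{\sigma(1)}\kappa_a^2$ contributions; the remaining $u_{xt}(1)u_x(1)$ and control cross-products, Young'd against weight $q(1)$, supply the $\tfrac1{q(1)}(2+\tfrac{\Upsilon_{\sigma,q}}2)$ and $q(1)$-weighted pieces, and the inherited $\gamma$, $-\gamma e^{-2\tau}$ complete the $u_t^2(1,t)$ and $u_t^2(1,t-\tau)$ coefficients.

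The main obstacle is the fourth-order, degenerate integration by parts for the bending multiplier: generating the $\int_0^1 x\sigma' u_{xx}^2$ term and the correct $x=1$ boundary data while rigorously discarding the $x=0$ contributions (terms such as $[\sigma u_{xx}\,u_x]_{x=0}$ and $[2x\sigma u_{xx}^2]_{x=0}$) must be justified in each degeneracy regime, since in the (SD) case $u'$ need not be defined at $0$ and one has to lean on $(\sigma u'')(0)=0$ and the membership $u\in Q_{\sigma,0}(0,1)$. A secondary, purely organizational difficulty is calibrating the Young weights so that the accumulated boundary terms collapse precisely onto the coefficients displayed in \eqref{eq:69}, and checking that the favorable negative boundary terms generated along the way may be discarded without weakening the upper bound.
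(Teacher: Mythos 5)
Your proposal follows essentially the same route as the paper: differentiate the multiplier term $\int_0^1 u_t\bigl(2xu_x+\tfrac{\Upsilon_{\sigma,q}}{2}u\bigr)\,dx$ and the exponentially weighted history term separately, integrate by parts, absorb the weight-derivative integrals via $x|\sigma'|\le K_\sigma\sigma\le\Upsilon_{\sigma,q}\sigma$ and $x|q'|\le\tfrac{q_2}{q_0}q\le\Upsilon_{\sigma,q}q$, and then convert the $x=1$ boundary terms using the feedback conditions, Young's inequality with weights $q(1),\sigma(1)$, and $(a+b+c)^2\le 3(a^2+b^2+c^2)$ — exactly the computation in \eqref{eq:70}--\eqref{eq:90}. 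Your explicit accounting of the transferred boundary-energy terms $\tfrac{\delta}{2}\kappa_b u^2(1,t)+\tfrac{\delta}{2}\kappa_r u_x^2(1,t)$ and of the $x=0$ contributions in the two degeneracy regimes is, if anything, more careful than the paper's own write-up, and is harmless since those terms are later absorbed into the constant $C_0$.
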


\begin{proof}
On the one hand, we have:
\begin{gather} \label{eq:70}
\frac{d}{dt}\left( \int_0^1 x u_t(x,t)  u_x(x,t) dx \right) = \int_0^1 x \Big( u_{tt}(x,t) u_x(x,t) + u_t(x,t) u_{xt}(x,t)\Big) dx.
\end{gather}
Using the equation $\eqref{eq:1}_1$, we have:
\begin{gather} \label{eq:71}
\int_0^1 x u_{tt}(x,t) u_x(x,t) \, dx =  \int_0^1 x (q u_x)_x(x,t) u_x(x,t) \, dx - \int_0^1 x (\sigma u_{xx})_{xx}(x, t) u_x(x,t) \, dx.
\end{gather}
Next, by proceeding by integrations by parts, we obtain the following identities:
\begin{align} \label{eq:72}
\int_0^1 x (q u_x)_x(x,t) u_x(x,t) dx = \frac{1}{2} q(1) u_x^2(1,t) +\frac{1}{2} \int_0^1 \Big( -q(x) + xq'(x) \Big) u_x^2(x,t) dx
\end{align}
%
\begin{align} \label{eq:73}
\begin{split}
& \int_0^1 x (\sigma u_{xx})_{xx}(x, t) u_x(x,t) dx  = \frac{1}{2}\int_0^1 \Big( 3 \sigma(x) - x \sigma'(x) \Big) u_{xx}^2(x,t) dx  \\
& \qquad +  (\sigma u_{xx})_x(1,t) u_x(1,t) - \frac{1}{2} \sigma(1) u_{xx}^2(1,t) - \sigma(1) u_{x}(1,t) u_{xx}(1,t).
\end{split}
\end{align}
Thus, utilizing expressions \eqref{eq:72} and \eqref{eq:73}, we deduce that:
\begin{align} \label{eq:74}
\begin{split}
& \int_0^1 x u_{tt}(x,t) u_x(x,t) dx \\
& \quad = \frac{1}{2}\int_0^1 \left( -3 \sigma(x) + x \sigma'(x) \right) u_{xx}^2(x,t) dx + \frac{1}{2} \int_0^1 \Big( -q(x) + xq'(x) \Big) u_x^2(x,t) dx \\
& \qquad - (\sigma u_{xx})_x(1,t) u_x(1,t)  + \sigma(1) u_{x}(1,t) u_{xx}(1,t) + \frac{1}{2} \sigma(1) u_{xx}^2(1,t) + \frac{1}{2} q(1) u_x^2(1,t).
\end{split}
\end{align}
In addition, integration by parts and incorporation of the boundary condition $\eqref{eq:1}_2$ yields:
\begin{align} \label{eq:75}
\int_0^1 x u_t(x,t) u_{xt}(x,t) dx & = \frac{1}{2} u_t^2(1,t) - \frac{1}{2} \int_0^1 u_t^2(x,t) dx.
\end{align}
Therefore, the sum of identities \eqref{eq:74} and \eqref{eq:75} yields the expression of the time derivative of the first integral term of $G(t)$:
\begin{align} \label{eq:76}
\begin{split}
& \frac{d}{dt}\left( \int_0^1 x u_t(x,t)  u_x(x,t) dx \right)  \\
& \quad = - \frac{1}{2} \int_0^1 u_t^2(x,t) dx + \frac{1}{2}\int_0^1 \Big( -3 \sigma(x) + x \sigma'(x) \Big) u_{xx}^2(x,t) dx \\
& \qquad + \frac{1}{2} \int_0^1 \Big( -q(x) + xq'(x) \Big) u_x^2(x,t) dx + \frac{1}{2} u_t^2(1,t) + \frac{1}{2} q(1) u_x^2(1,t)  \\
& \qquad + \frac{1}{2} \sigma(1) u_{xx}^2(1,t) + \Big( \sigma(1) u_{xx}(1,t)-  (\sigma u_{xx})_x(1,t) \Big) u_x(1,t).
\end{split}
\end{align}
On the other hand, we have:
\begin{align} \label{eq:77}
\frac{d}{dt} \left( \int_0^1 u_t(x,t) u(x,t) dx \right) & = \int_0^1 \Big( (qu_x)_x - (\sigma u_{xx})_{xx} \Big)(x,t) u(x,t) dx + \int_0^1  u_t^2(x,t) dx.
\end{align}
Applying integrations by parts yields:
\begin{align} \label{eq:78}
	 \int_0^1 (qu_x)_x(x,t) u(x,t) dx & = q(1) u_x(1,t) u(1,t) - \int_0^1 q(x) u_x^2(x,t) dx \\
\begin{split} \label{eq:79}
\int_0^1 (\sigma u_{xx})_{xx}(x,t) u(x,t) dx & =  \int_0^1 \sigma(x) u_{xx}^2(x,t) dx + (\sigma u_{xx})_x(1,t) u(1,t) \\
	& \qquad -\sigma(1) u_{xx}(1,t) u_x(1,t).
\end{split}
\end{align}
Using the identities \eqref{eq:78} and \eqref{eq:79}, the relation \eqref{eq:77} becomes:
\begin{align} \label{eq:80}
\begin{split}
& \frac{d}{dt} \left( \int_0^1 u_t(x,t) u(x,t) dx \right) \\
& \quad = \int_0^1 u_t^2(x,t) dx - \int_0^1 q(x) u_x^2(x,t) - \int_0^1 \sigma(x) u_{xx}^2(x,t) dx  \\
& \qquad + \Big( q(1) u_x(1,t) - (\sigma u_{xx})_x(1,t) \Big) u(1,t) + \sigma(1) u_{xx}(1,t) u_x(1,t).
\end{split}
\end{align}
Finally, combining \eqref{eq:76} and \eqref{eq:80} yields:
\begin{align} \label{eq:81}
& \frac{d}{dt} \left( \int_0^1 u_t(x,t) \Big( 2x u_x(x,t) + \frac{\Upsilon_{\sigma, \, q}}{2} u(x,t) \Big) dx \right) \nonumber\\
& = \int_0^1 \left( -\Big( 1+\frac{\Upsilon_{\sigma, \, q}}{2} \Big) q(x) + x q'(x) \right) u_x^2(x,t) \, dx + \int_0^1 \Big( -\left(3 + \frac{\Upsilon_{\sigma, q}}{2} \right) \sigma(x) + x \sigma'(x) \Big) u_{xx}^2(x,t) \, dx \nonumber \\
& \quad + \left( -1 + \frac{\Upsilon_{\sigma, \, q}}{2} \right) \int_0^1 u_t^2(x,t) \, dx + q(1) u_x^2(1,t) + u_t^2(1,t) + \sigma(1) u_{xx}^2(1,t) - 2 (\sigma u_{xx})_x(1,t) u_x(1,t) \nonumber \\
& \quad + \left(2 + \frac{\Upsilon_{\sigma, \, q}}{2}\right) \sigma(1) u_{xx}(1,t) u_x(1,t) + \frac{\Upsilon_{\sigma, \, q}}{2} \Big( q(1) u_x(1,t) - (\sigma u_{xx})_x(1,t) \Big) u(1,t).
\end{align}
Owing to the definition of $\Upsilon_{\sigma, \, q}$ in \eqref{eq:49}, it follows that:
\begin{gather} \label{eq:82}
\begin{split}
-\Big( 1+\frac{\Upsilon_{\sigma, \, q}}{2} \Big) q(x) + x q'(x) \leqslant 0 \\
- \left( 3+\frac{\Upsilon_{\sigma, q}}{2} \right) \sigma(x) + x \sigma'(x) \leqslant \left( -1 + \frac{\Upsilon_{\sigma, \, q}}{2} \right) \sigma(x) \leqslant 0.
\end{split}
\end{gather}
Consequently, we obtain the following estimate:
\begin{align} \label{eq:83}
& \frac{d}{dt} \left( \int_0^1 u_t(x,t) \Big( 2x u_x(x,t) + \frac{\Upsilon_{\sigma, \, q}}{2} u(x,t) \Big) dx \right) \nonumber \\
& \leqslant \Big( -1 + \frac{\Upsilon_{\sigma, \, q}}{2} \Big) \left( \int_0^1 u_t^2(x,t) \, dx + \int_0^1 q(x) u_x^2(x,t) \, dx + \int_0^1 \sigma(x) u_{xx}^2(x,t) \, dx \right) \nonumber \\
& \quad + u_t^2(1,t) + \underbrace{\sigma(1) u_{xx}^2(1,t)}_{(i)} + \underbrace{\left(2 + \frac{\Upsilon_{\sigma, \, q}}{2}\right) \sigma(1) u_{xx}(1,t) u_x(1,t)}_{(ii)} \underbrace{- (\sigma u_{xx})_x(1,t) u_x(1,t)}_{(iii)} \nonumber \\
& \quad + \underbrace{\Big( q(1) u_x(1,t) - (\sigma u_{xx})_x(1,t) \Big) \Big( \frac{\Upsilon_{\sigma, \, q}}{2} u(1,t) + u_x(1,t) \Big)}_{(iv)}.
\end{align}
By applying the boundary conditions $\eqref{eq:1}_3$ and $\eqref{eq:1}_4$ to terms $(i), \, (ii), \, (iii)$ and $(iv)$ and, in turn, using Young's inequality yield the following inequalities:
\begin{align} \label{eq:84}
	(i) & \leqslant 2 \left(\sigma(1)\right)^{-1} \Big( \kappa_r^2 u_x^2(1,t) + \kappa_a^2 u_{xt}^2(1,t) \Big) \\ \label{eq:85}
	(ii) & \leqslant \frac{1}{2q(1)}\left(2 + \frac{\Upsilon_{\sigma, \, q}}{2}\right) \Big( \left( q^2(1) + 2 \kappa_r^2 \right) u_x^2(1,t) + 2 \kappa_a^2 u_{xt}^2(1,t)\Big) \\ \label{eq:86}
	(iii) & \leqslant -\frac{1}{2} q(1)u_x^2(1,t) + \frac{3}{2q(1)} \Big( \kappa_v^2 u_t^2(x,1) + \kappa_d^2 u_t^2(1, t-\tau) + \kappa_b^2 u^2(1,t) \Big) \\
\begin{split} \label{eq:87}
	(iv) & \leqslant \frac{3}{2q(1)} \Big( \kappa_v^2 u_t^2(x,1) + \kappa_d^2 u_t^2(1, t-\tau) \Big) + \left( q(1)\frac{\Upsilon_{\sigma, \, q}^2}{4} + \frac{3}{2q(1)} \kappa_b^2 \right) u^2(1,t) \\
& \qquad + q(1) u_x^2(1,t).
\end{split}
\end{align}
Hence, in view of the preceding expressions \eqref{eq:84}-\eqref{eq:87}, we rewrite \eqref{eq:83} as :
\begin{align} \label{eq:88}
\begin{split}
& \frac{d}{dt} \left( \int_0^1 u_t(x,t) \Big( 2x u_x(x,t) + \frac{\Upsilon_{\sigma, \, q}}{2} u(x,t) \Big) dx \right) \\
& \quad \leqslant \Big( -1 + \frac{\Upsilon_{\sigma, \, q}}{2} \Big) \left( \int_0^1 u_t^2(x,t) \, dx + \int_0^1 q(x) u_x^2(x,t) \, dx + \int_0^1 \sigma(x) u_{xx}^2(x,t) \, dx \right) \\
& \quad + \left( 1 + \frac{3}{q(1)} \kappa_v^2 \right) u_t^2(1,t) + \left[ \left( \frac{2}{\sigma(1)} + \dfrac{1}{q(1)} \left( 2 + \dfrac{\Upsilon_{\sigma, q}}{2} \right) \right)\kappa_r^2 + \left( \frac{3}{2}+\frac{\Upsilon_{\sigma, q}}{4} \right) q(1) \right] u_x^2(1,t) \\
& \quad + \left( \frac{2}{\sigma(1)} + \dfrac{1}{q(1)} \left( 2 + \dfrac{\Upsilon_{\sigma, q}}{2} \right) \right) \kappa_a^2 u_{xt}^2(1,t) + \left( \frac{3}{q(1)} \kappa_b^2 + \frac{q(1)}{4} \Upsilon_{\sigma, q}^2 \right) u^2(1,t) \\
& \quad + \frac{3}{q(1)} \kappa_d^2 u_t^2(1,t-\tau).
\end{split}
\end{align}
Moreover, the time derivative of the last integral term of the function $G(t)$ is:
\begin{align} \label{eq:89}
\begin{split}
\frac{d}{dt} \left( \int_0^1 e^{-2\tau s} u^2_t(1, t-\tau s) ds \right) =& \; \tau^{-1} \left( u_t^2(1,t) - e^{-2\tau} u_t^2(1, t-\tau) \right) \\
& - 2 \int_0^1 e^{-2\tau s} u_t^2(1,t-\tau s) ds.
\end{split}
\end{align}
We now estimate the time derivative of the function $G(t)$. Applying the triangle inequality and combining it with the time derivative estimates \eqref{eq:88} and \eqref{eq:89} of the integral terms in $G(t)$ yields:
\begin{align} \label{eq:90}
\begin{split}
	\frac{d}{dt} G(t) & \leqslant \Big( -1 + \frac{\Upsilon_{\sigma, \, q}}{2} \Big) \left( \int_0^1 u_t^2(x,t) \, dx + \int_0^1 q(x) u_x^2(x,t) \, dx + \int_0^1 \sigma(x) u_{xx}^2(x,t) \, dx \right) \\
	& \quad - 2\gamma \tau \int_0^1 e^{-2\tau s} u_t^2(1,t-\tau s) ds  + \left( \frac{3}{q(1)}\kappa_d^2-\gamma e^{-2\tau} \right)  u_t^2(1,t-\tau) \\
	& \quad + \left( 1 + \gamma + \frac{3}{q(1)} \kappa_v^2 \right) u_t^2(1,t) + \left( \frac{3}{q(1)} \kappa_b^2 + \frac{q(1)}{4} \Upsilon_{\sigma, q}^2 \right) u^2(1,t) \\
	& \quad + \left[ \left( \frac{2}{\sigma(1)} + \dfrac{1}{q(1)} \left( 2 + \dfrac{\Upsilon_{\sigma, q}}{2} \right) \right)\kappa_r^2 + \left( \frac{3}{2}+\frac{\Upsilon_{\sigma, q}}{4} \right) q(1) \right] u_x^2(1,t) \\
& \quad + \left( \frac{2}{\sigma(1)} + \dfrac{1}{q(1)} \left( 2 + \dfrac{\Upsilon_{\sigma, q}}{2} \right) \right) \kappa_a^2 u_{xt}^2(1,t).
\end{split}
\end{align}
Thereby leading us to the desired result \eqref{eq:69}.
\end{proof}


\begin{lemma} \label{l2}
Let $T>0$. Assume that the function $\sigma$ is either (WD) or (SD). For $\varepsilon > 0$ small enough, we have for any $r \in (0,T)$:
\begin{gather} \label{eq:91}
\varepsilon \min\Big\{ 2 - \Upsilon_{\sigma, q}; \, 4e^{-2\tau} \Big\} \int_r^T E(t) \, dt \leqslant L(r) - L(T) + \varepsilon C_0 \left[ \int_r^T u^2(1,t) dt + \int_r^T u_x^2(1,t) dt \right].
\end{gather}
where $C_0$ is a positive constant given by:
\begin{align} \label{eq:92}
C_0 := \max \Bigg\{\dfrac{3}{q(1)} \kappa_b^2 + \dfrac{q(1)}{4} \Upsilon_{\sigma, q}^2 , \; \left( \dfrac{2}{\sigma(1)} + \dfrac{1}{q(1)} \left( 2 + \dfrac{\Upsilon_{\sigma, q}}{2} \right) \right)\kappa_r^2 + \left( \frac{3}{2}+\frac{\Upsilon_{\sigma, q}}{4} \right) q(1) \Bigg\}.
\end{align}
\end{lemma}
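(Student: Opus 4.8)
The plan is to differentiate the Lyapunov functional $L(t) = E(t) + \varepsilon G(t)$ defined in \eqref{eq:47}, bound its derivative from above, and integrate the resulting differential inequality over $[r,T]$. First I would write
\[
\frac{d}{dt} L(t) = \frac{d}{dt} E(t) + \varepsilon \frac{d}{dt} G(t),
\]
and substitute the dissipation estimate \eqref{eq:40} of Proposition \ref{p2} together with the estimate \eqref{eq:69} of Lemma \ref{l1}. The right-hand side then splits into the strictly negative energy term $-\varepsilon \min\{2 - \Upsilon_{\sigma,q};\, 4e^{-2\tau}\}\, E(t)$, the three boundary-velocity terms $u_t^2(1,t)$, $u_t^2(1,t-\tau)$ and $u_{xt}^2(1,t)$ (each carrying a negative contribution $-C_{\kappa_v,\kappa_d,\kappa_a}^{\gamma}$ from $\tfrac{d}{dt}E$ and a positive contribution of order $\varepsilon$ from $\tfrac{d}{dt}G$), and the two remaining boundary terms $u^2(1,t)$ and $u_x^2(1,t)$.

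The key step is to absorb the three boundary-velocity terms into the dissipation furnished by $E$. Since $C_{\kappa_v,\kappa_d,\kappa_a}^{\gamma} > 0$ under \eqref{eq:24a}, I would impose on $\varepsilon$ the finitely many smallness conditions
\[
\varepsilon \Big(1 + \gamma + \tfrac{3}{q(1)}\kappa_v^2\Big) \leqslant C_{\kappa_v,\kappa_d,\kappa_a}^{\gamma}, \qquad \varepsilon\,\tfrac{3}{q(1)}\kappa_d^2 \leqslant C_{\kappa_v,\kappa_d,\kappa_a}^{\gamma},
\]
\[
\varepsilon \Big(\tfrac{2}{\sigma(1)} + \tfrac{1}{q(1)}\big(2 + \tfrac{\Upsilon_{\sigma,q}}{2}\big)\Big)\kappa_a^2 \leqslant C_{\kappa_v,\kappa_d,\kappa_a}^{\gamma},
\]
which make the net coefficient of each of $u_t^2(1,t)$, $u_t^2(1,t-\tau)$ and $u_{xt}^2(1,t)$ non-positive; the term $u_t^2(1,t-\tau)$ is in fact the most favourable, since its coefficient in \eqref{eq:69} is $\tfrac{3}{q(1)}\kappa_d^2 - \gamma e^{-2\tau}$. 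Choosing $\varepsilon$ below the minimum of these thresholds, and also below $C_\Upsilon^{-1}$ so as to retain the norm equivalence of Proposition \ref{p3}, discards these three terms and yields
\[
\frac{d}{dt} L(t) \leqslant -\varepsilon \min\{2 - \Upsilon_{\sigma,q};\, 4e^{-2\tau}\}\, E(t) + \varepsilon C_0\big(u^2(1,t) + u_x^2(1,t)\big),
\]
where $C_0$ as in \eqref{eq:92} is precisely the maximum of the coefficient $\tfrac{3}{q(1)}\kappa_b^2 + \tfrac{q(1)}{4}\Upsilon_{\sigma,q}^2$ of $u^2(1,t)$ and the coefficient of $u_x^2(1,t)$ appearing in \eqref{eq:69}, so both boundary terms are dominated by $C_0\big(u^2(1,t) + u_x^2(1,t)\big)$.

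Finally I would integrate this inequality over $[r,T]$, use $\int_r^T \tfrac{d}{dt} L(t)\, dt = L(T) - L(r)$, and rearrange to reach \eqref{eq:91}. The main obstacle is purely organizational rather than analytic: one must verify that the several smallness thresholds on $\varepsilon$ are mutually compatible — they are, being finitely many strictly positive bounds — and confirm that the coefficients of $u^2(1,t)$ and $u_x^2(1,t)$ carried over from Lemma \ref{l1} are exactly the two quantities whose maximum defines $C_0$. No new analytic input beyond \eqref{eq:40} and \eqref{eq:69} is needed, and since none of the thresholds on $\varepsilon$ depends on $r$, the estimate holds uniformly for every $r \in (0,T)$.
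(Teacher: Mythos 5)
Your proposal is correct and follows essentially the same route as the paper: substitute \eqref{eq:40} and \eqref{eq:69} into $\frac{d}{dt}L = \frac{d}{dt}E + \varepsilon\frac{d}{dt}G$, absorb the three boundary-velocity terms into the dissipation $-C_{\kappa_v,\kappa_d,\kappa_a}^{\gamma}(\cdot)$ via exactly the three smallness thresholds on $\varepsilon$ that the paper collects in \eqref{eq:97}, identify $C_0$ as the maximum of the coefficients of $u^2(1,t)$ and $u_x^2(1,t)$ carried over from Lemma \ref{l1}, and integrate over $(r,T)$. The only cosmetic difference is your extra (harmless) requirement $\varepsilon < C_\Upsilon^{-1}$, which the paper defers to where Proposition \ref{p3} is actually invoked.
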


\begin{proof} 
Using expressions \eqref{eq:69} and \eqref{eq:40}, we derive the following estimates:
\begin{align} \label{eq:93}
& 	\frac{d}{dt} L(t) \leqslant  - C_{\kappa_v, \kappa_d, \kappa_a}^{\gamma} \Big(u_t^2(1, t) + u_t^2(1, t-\tau) + u_{xt}^2(1,t) \Big) \nonumber \\
& \quad + \varepsilon \left[ -\min\Big\{ 2 - \Upsilon_{\sigma, q}; \, 4e^{-2\tau} \Big\} E(t) + \frac{3}{q(1)}\kappa_d^2  u_t^2(1,t-\tau) + \left( 1 + \gamma + \frac{3}{q(1)} \kappa_v^2 \right) u_t^2(1,t) \right. \nonumber\\
& \quad + \left[ \left( \frac{2}{\sigma(1)} + \dfrac{1}{q(1)} \left( 2 + \dfrac{\Upsilon_{\sigma, q}}{2} \right) \right)\kappa_r^2 + \left( \frac{3}{2}+\frac{\Upsilon_{\sigma, q}}{4} \right) q(1) \right] u_x^2(1,t) \nonumber \\
& \quad \left. + \left( \frac{2}{\sigma(1)} + \dfrac{1}{q(1)} \left( 2 + \dfrac{\Upsilon_{\sigma, q}}{2} \right) \right) \kappa_a^2 u_{xt}^2(1,t)  + \left( \frac{3}{q(1)} \kappa_b^2 + \frac{q(1)}{4} \Upsilon_{\sigma, q}^2 \right) u^2(1,t) \right] \nonumber \\
& \frac{d}{dt} L(t) \leqslant - \varepsilon \min\Big\{ 2 - \Upsilon_{\sigma, q}; \, 4e^{-2\tau} \Big\} E(t) - C_\varepsilon \Big(u_t^2(1, t) + u_t^2(1, t-\tau) + u_{xt}^2(1,t) \Big) \nonumber \\
& \qquad \qquad + \varepsilon C_0 \Big( u^2(1,t) + u_x^2(1,t) \Big) 
\end{align}
where the positive constant $C_0$ is defined in \eqref{eq:92} and the expression of $C_\varepsilon$ is given by:
\begin{align} \label{eq:96}
\begin{split}
C_\varepsilon & := \min \Bigg\{C_{\kappa_v, \kappa_d, \kappa_a}^{\gamma} - \varepsilon \left( 1 + \gamma + \frac{3}{q(1)} \kappa_v^2 \right), \, C_{\kappa_v, \kappa_d, \kappa_a}^{\gamma} - \varepsilon \frac{3}{q(1)}\kappa_d^2,  \\
 & \qquad \qquad \quad C_{\kappa_v, \kappa_d, \kappa_a}^{\gamma} - \varepsilon \left( \frac{2}{\sigma(1)} + \dfrac{1}{q(1)} \left( 2 + \dfrac{\Upsilon_{\sigma, q}}{2} \right) \right) \kappa_a^2 \Bigg\} > 0, \\
\end{split}
\end{align}
for $\varepsilon$ chosen sufficiently small such that:
\begin{gather} \label{eq:97}
0 < \varepsilon < \min\left\{ \dfrac{C_{\kappa_v, \kappa_d, \kappa_a}^\gamma}{\dfrac{3}{q(1)}\kappa_d^2}, \; \dfrac{C_{\kappa_v, \kappa_d, \kappa_a}^\gamma}{1 + \gamma + \dfrac{3}{q(1)} \kappa_v^2}, \; \dfrac{C_{\kappa_v, \kappa_d, \kappa_a}^\gamma}{+ \left( \dfrac{2}{\sigma(1)} + \dfrac{1}{q(1)} \left( 2 + \dfrac{\Upsilon_{\sigma, q}}{2} \right) \right) \kappa_a^2} \right\}.
\end{gather}
Finally, we obtain the following estimate, regardless of the choice of $\kappa_a$:
\begin{align}
\frac{d}{dt} L(t) \leqslant - \varepsilon \min\Big\{ 2 - \Upsilon_{\sigma, q}; \, 4e^{-2\tau} \Big\} E(t) + \varepsilon C_0 \Big( u^2(1,t) + u_x^2(1,t) \Big).
\end{align}
Let $r \in (0, T)$. Integrating inequality \eqref{eq:93} over $(r, T)$ with respect to time, the result \eqref{eq:91} follows directly.
\end{proof}


\begin{lemma} \label{l3}
Suppose that $\sigma$ satisfies (WD) or (SD) and consider $\kappa_b, \, \kappa_r > 0$. Let $\delta$ be a positive constant given by:
\begin{align}
\delta := \dfrac{1}{2} \left[ \left( \dfrac{1}{\kappa_b} + \frac{1}{\kappa_r} \right) \max \left\{ \frac{1}{q_0}, \, C_1^2\right\} \right]^{-1}. \label{eq:97a}
\end{align}
Then, for any regular solution $u$ of the delayed system \eqref{eq:1}, we have the following estimate:
\begin{align} \label{eq:98}
& \int_r^T \left( u^2(1,t) + u_x^2(1,t) \right) \, dt \leqslant 2 \left[ \tilde{\delta} \int_r^T E(t) dt + C_2^\delta \Big( E(r) - E(T) \Big) + C_3 \Big( E(T) + E(r) \Big) \right],
\end{align}
for every $\tilde{\delta} > 0$, where $C_2^\delta$ and $C_3$ are defined as follows:
\begin{gather} \label{eq:99}
C_2^\delta := 
\dfrac{\max \left\{ \dfrac{\kappa_v^2}{\delta}; \, \dfrac{\kappa_d^2}{\delta}; \, \dfrac{\kappa_a^2}{2\delta} \right\}}{C_{\kappa_v, \kappa_d, \kappa_a}^{\gamma}} + \dfrac{\max\left\{ \dfrac{1}{q_0^2}; \, \dfrac{ C_1^2}{q_0} \right\}}{\tilde{\delta} C_{\kappa_v, \kappa_d, \kappa_a}^{\gamma}}, \;
C_3 := \max \left\{ 1; \, \dfrac{4}{\kappa_b \, q_0^2}; \, \dfrac{4 C_1^2}{\kappa_r \, q_0} \right\}.
\end{gather}
\end{lemma}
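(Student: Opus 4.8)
The plan is to control the boundary observation $u^2(1,t)+u_x^2(1,t)$ by testing the beam equation against the static auxiliary field furnished by Proposition~\ref{p4}. For each fixed $t$, let $y=y(\cdot,t)\in Q_{\sigma,0}(0,1)$ be the unique solution of the variational problem \eqref{eq:57} with data $\lambda=u(1,t)$ and $\mu=u_x(1,t)$. Testing the weak formulation \eqref{eq:61} against $u(\cdot,t)\in H^2_{\sigma,0}(0,1)$ gives $\chi(y,u)=u^2(1,t)+u_x^2(1,t)$, while \eqref{eq:58} yields $|||y|||^2\leqslant C_{\lambda,\mu}^2$ and $\lVert y\rVert_{L^2(0,1)}^2\leqslant q_0^{-1}C_{\lambda,\mu}^2$, with $C_{\lambda,\mu}^2\leqslant 2\big(q_0^{-1}u^2(1,t)+C_1^2 u_x^2(1,t)\big)$. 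First I would insert $\varphi=y$ into the weak form of $u_{tt}+(\sigma u_{xx})_{xx}-(qu_x)_x=0$, integrate by parts twice and absorb the boundary conditions $\eqref{eq:1}_3$ and $\eqref{eq:1}_4$ exactly as in Proposition~\ref{p1}, producing the pointwise identity
\[
u^2(1,t)+u_x^2(1,t)=-\int_0^1 u_{tt}\,y\,dx-\big(\kappa_v u_t(1,t)+\kappa_d u_t(1,t-\tau)\big)y(1)-\kappa_a u_{xt}(1,t)\,y'(1),
\]
which I would then integrate over $(r,T)$ and estimate group by group.

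For the two feedback boundary terms I would apply Young's inequality with the parameter $\delta$. The squared velocities $\kappa_v^2 u_t^2(1,t)$, $\kappa_d^2 u_t^2(1,t-\tau)$ and $\kappa_a^2 u_{xt}^2(1,t)$ collect the numerator $\max\{\kappa_v^2/\delta,\kappa_d^2/\delta,\kappa_a^2/(2\delta)\}$ and, being pure boundary dissipation, are bounded after integration by $\tfrac{1}{C_{\kappa_v,\kappa_d,\kappa_a}^{\gamma}}\big(E(r)-E(T)\big)$ via Proposition~\ref{p2}; this is the first summand of $C_2^\delta$. The conjugate traces are handled through $\kappa_b y^2(1)+\kappa_r (y'(1))^2\leqslant |||y|||^2$, so that $y^2(1)+(y'(1))^2\leqslant(\kappa_b^{-1}+\kappa_r^{-1})|||y|||^2\leqslant 2(\kappa_b^{-1}+\kappa_r^{-1})\max\{q_0^{-1},C_1^2\}\big(u^2(1,t)+u_x^2(1,t)\big)$. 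The value of $\delta$ in \eqref{eq:97a} is chosen precisely so that $\tfrac{\delta}{2}\big(y^2(1)+(y'(1))^2\big)$ absorbs exactly one half of the left-hand side; moving this half to the right is what produces the prefactor $2$ in \eqref{eq:98}.

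For the inertial integral I would write $-\int_0^1 u_{tt}\,y\,dx=-\tfrac{d}{dt}\int_0^1 u_t y\,dx+\int_0^1 u_t y_t\,dx$, where $y_t(\cdot,t)$ solves \eqref{eq:57} with the differentiated data $(u_t(1,t),u_{xt}(1,t))$, the problem being linear in $(\lambda,\mu)$. Integrating the exact derivative over $(r,T)$ leaves $-\big[\int_0^1 u_t y\,dx\big]_r^T$, which $\lVert y\rVert_{L^2(0,1)}\leqslant q_0^{-1/2}C_{\lambda,\mu}$ and Young's inequality bound by $E(T)+E(r)$ with constants gathered into $C_3$ (here $\kappa_b,\kappa_r>0$ is used to bound the endpoint traces $u^2(1,\cdot),u_x^2(1,\cdot)$ by $E(\cdot)$). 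The residual is split by Young's inequality with the free parameter $\tilde\delta$: the factor $\lVert u_t\rVert_{L^2(0,1)}^2\leqslant 2E(t)$ gives the $\tilde\delta\int_r^T E$ contribution, while $\tilde\delta^{-1}\lVert y_t\rVert_{L^2(0,1)}^2$, estimated through \eqref{eq:58} applied to $y_t$, is again boundary dissipation in $u_t(1,t)$ and $u_{xt}(1,t)$ and yields the second summand of $C_2^\delta$ after Proposition~\ref{p2}.

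The hard part will be the time dependence of the multiplier $y$: I must verify that for a regular solution the boundary traces $t\mapsto(u(1,t),u_x(1,t))$ are smooth enough that $y(\cdot,t)$ is differentiable in $t$ with $y_t$ solving the differentiated elliptic problem, so that the uniform bounds \eqref{eq:58} hold simultaneously for $y$ and $y_t$; this is what legitimises the integration by parts in $t$ and the splitting above. The degeneracy at $x=0$ causes no difficulty, since Proposition~\ref{p4} already places $y\in Q_{\sigma,0}(0,1)$, whence the boundary contribution at $x=0$ in the Gauss-Green formula \eqref{eq:14} vanishes for both (WD) and (SD). Once these points are settled, tracking the coefficients so that the absorption reproduces exactly the $\delta$ of \eqref{eq:97a} and the leftover constants coincide with $C_2^\delta$ and $C_3$ is routine bookkeeping.
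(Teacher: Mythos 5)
Your proposal is correct and follows essentially the same route as the paper: the same static multiplier $y(\cdot,t)$ from Proposition~\ref{p4} with data $(\lambda,\mu)=(u(1,t),u_x(1,t))$, the same identity for $u^2(1,t)+u_x^2(1,t)$ (the paper writes it already integrated over $(r,T)$ as \eqref{eq:106}, you state the pointwise version first), the same $\delta$- and $\tilde\delta$-Young splittings, the same absorption of $\tfrac{\delta}{2}\bigl(y^2(1)+y_x^2(1)\bigr)$ into half the left-hand side to produce the factor $2$, and the same treatment of the inertial term via $y_t$ solving the time-differentiated elliptic problem. Your remark that the time-regularity of $t\mapsto y(\cdot,t)$ needs justification is a fair point that the paper passes over silently, but it does not change the argument.
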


\begin{proof}
Set $\lambda := u(1, t), \, \mu := u_x(1, t)$ and let $y := y(\cdot, t) \in H^2_{\sigma, 0}(0, 1)$ be the unique solution of the following equation:
\begin{align} \label{eq:100}
\begin{split}
& \int_0^1 \sigma(x) y_{xx} \varphi_{xx} dx + \int_0^1 q(x) y_x \varphi_x dx + \kappa_b y(1, t)\varphi(1) + \kappa_r y_x(1, t) \varphi_x(1) \\
  & \qquad = \lambda \varphi(1) + \mu \varphi_x(1), \quad \forall \varphi \in H^2_{\sigma,0}(0,1).
\end{split}
\end{align}
Due to Proposition \ref{p4}, $y \in \mathcal{D}(\mathbb{A}_\sigma)$ and solves the following system: 
\begin{gather} \label{eq:101}
\left\{
\begin{array}{l}
\left( \sigma y_{xx} \right)_{xx} - (q y_x)_x = 0, \\
q(1) y_x(1,t) - (\sigma y_{xx})_x(1,t) + \kappa_b y(1,t) = \lambda, \\
\sigma(1) y_{xx}(1,t) + \kappa_r y_x(1,t) = \mu.
\end{array}
\right. 
\end{gather}
First, multiplying $\eqref{eq:1}_1$ by $y$ and integrating it over $(r,T)\times(0,1)$ yields:
\begin{gather} \label{eq:102}
\int_r^T \int_0^1 u_{tt} y \, dx \, dt + \int_r^T \int_0^1 \Big[ \left( \sigma(x) u_{xx} \right)_{xx} - \left( q(x) u_x \right)_x \Big] y \, dx \, dt = 0.
\end{gather}
Performing integration by parts,
we obtain:
\begin{gather} 
\left[ \int_0^1 u_t y dx \right]_{t=r}^{t=T} + \int_r^T \Big( \left( \sigma u_{xx} \right)_x(1,t) - q(1) u_x(1,t) \Big) y(1,t) dt - \int_r^T \sigma(1) u_{xx}(1,t) y_x(1,t) dt \nonumber \\
- \int_r^T \int_0^1 u_{t} y_t dx \, dt  +  \int_r^T \int_0^1 \sigma(x) u_{xx} y_{xx} dx \, dt + \int_r^T \int_0^1 q(x) u_{x} y_x dx \, dt = 0. \label{eq:103}
\end{gather}
Second, multiplying the first equation of \eqref{eq:101} by $u$ and integrating it over $(r,T)\times(0,1)$ by parts, the following hold:
\begin{gather} 
\int_r^T \int_0^1 \Big( \left( \sigma y_{xx} \right)_{xx} - (q y_x)_x \Big) u \, dx \, dt = 0. \nonumber \\
\int_r^T \int_0^1 \sigma(x) y_{xx} u_{xx} \, dx \, dt + \int_r^T \int_0^1 q(x) u_x y_x \, dx \, dt  \nonumber \\ 
+ \, \int_r^T \left[ \Big( \left(\sigma y_{xx} \right)_{x}(1, t) - q(1) y_x(1, t) \Big) u(1,t) - \sigma(1) y_{xx}(1,t) u_x(1,t) \right] \, dt = 0. \label{eq:104}
\end{gather}
Using the identity \eqref{eq:104}, the relation \eqref{eq:103} becomes:
\begin{align}
\begin{split}
& \int_r^T \int_0^1 u_{t} y_t dx \, dt - \left[ \int_0^1 u_t y dx \right]_{t=r}^{t=T} \\
& \quad = \int_r^T \left[ \Big( \left( \sigma u_{xx} \right)_x(1,t) - q(1) u_x(1,t) \Big) y(1,t) -  \sigma(1) u_{xx}(1,t) y_x(1,t) \right] dt \nonumber \\
& \qquad - \int_r^T \left[ \Big( \left(\sigma y_{xx} \right)_{x}(1,t) - q(1) y_x(1,t) \Big) u(1,t) - \sigma(1) y_{xx}(1,t) u_x(1,t) \right] \, dt.
\end{split}
\end{align}
Using the boundary conditions $\eqref{eq:1}_3, \, \eqref{eq:1}_4, \, \eqref{eq:101}_2$ and $\eqref{eq:101}_3$, and substituting $\lambda$ and $\mu$ by their expressions, we obtain:
\begin{align} \label{eq:105}
\begin{split}
& \int_r^T \int_0^1 u_{t} y_t dx \, dt - \left[ \int_0^1 u_t y dx \right]_{t=r}^{t=T} = \int_r^T \Big( \kappa_v u_t(1,t) + \kappa_d  u_t(1,t-\tau) \Big) y(1,t) dt \\
& \qquad  + \int_r^T \kappa_a u_{xt}(1,t) y_x(1,t) dt +\int_r^T \left( u^2(1,t) + u_x^2(1,t) \right) \, dt.
\end{split}
\end{align}
Then:
\begin{align} \label{eq:106}
\begin{split}
& \int_r^T \left( u^2(1,t) + u_x^2(1,t) \right) \, dt =  \int_r^T \int_0^1 u_{t} y_t dx \, dt - \left[ \int_0^1 u_t y dx \right]_{t=r}^{t=T}  \\
& \qquad - \int_r^T \kappa_a \, u_{xt}(1,t) y_x(1,t) dt - \int_r^T \Big( \kappa_v \, u_t(1,t) + \kappa_d \, u_t(1,t-\tau) \Big) y(1,t) dt.
\end{split}
\end{align}
Next, we need to estimate the integral terms on the right-hand side of the preceding identity. 
Using Young's inequality, we have:
\begin{align} \label{eq:107}
\left| \int_0^1 u_t(x,t) y(x,t) \, dx \right| & \leqslant \frac{1}{2} \left( \int_0^1 u_t^2(x,t) dx + \int_0^1 y(x,t)^2 dx \right).
\end{align}
From \eqref{eq:58}, we get:
\begin{align} \label{eq:108}
\lVert y(\cdot,t) \rVert^2_{L^2(0,1)}	& \leqslant \frac{2}{q_0} \left( \frac{1}{q_0} u^2(1,t) + C_1^2 u^2_x(1,t) \right).
\end{align}
Consequently, by inserting inequalities \eqref{eq:11} and \eqref{eq:12} into \eqref{eq:108}, equation \eqref{eq:107} becomes:
\begin{align} \label{eq:109}
\left| \int_0^1 u_t(x,t) y(x,t) \, dx \right| & \leqslant \max \left\{ 1; \, \frac{4}{\kappa_b \, q_0^2}; \, \frac{4 C_1^2}{\kappa_r \, q_0} \right\} E(t).
\end{align}
Then, we obtain:
\begin{gather} \label{eq:110}
\left| \left[ \int_0^1 u_t(x,t) y(x,t) \, dx \right]_{t=r}^{t=T} \right| \leqslant \max \left\{ 1; \, \frac{4}{\kappa_b \, q_0^2}; \, \frac{4 C_1^2}{\kappa_r \, q_0} \right\}  \Big( E(T) + E(r) \Big).
\end{gather}
Second, applying the $\delta-$inequality, we have: 
\begin{align} \label{eq:111}
\begin{split}
& \left| \int_r^T \Big( \kappa_v u_t(1,t) + \kappa_d  u_t(1,t-\tau) \Big) y(1,t) dt + \int_r^T \kappa_a u_{xt}(1,t) y_x(1,t) dt \right| \\
& \quad \leqslant \frac{1}{\delta} \int_r^T \Big( \kappa_v^2 u_t^2(1,t) + \kappa_d^2  u_t^2(1,t-\tau) \Big) dt +  \frac{\kappa_a^2}{2\delta} \int_r^T u_{xt}^2(1,t) dt \\
& \qquad + \frac{\delta}{2} \int_r^T \Big( y^2(1,t) + y^2_x(1,t) \Big) dt.
\end{split}
\end{align}
From \eqref{eq:56} and \eqref{eq:58}, we have:
\begin{align} \label{eq:112}
\begin{split}
& y^2(1,t) + y^2_x(1,t) \leqslant \Big( \frac{2}{\kappa_r} + \frac{2}{\kappa_b} \Big) \Big( \frac{1}{q_0} u^2(1,t) + C_1^2 u^2_x(1,t) \Big).
\end{split}
\end{align}
Therefore, using\eqref{eq:40} and the preceding inequality, it follows that:
\begin{align} `\label{eq:113}
& \left| \int_r^T \Big( \kappa_v u_t(1,t) + \kappa_d  u_t(1,t-\tau) \Big) y(1,t) dt + \int_r^T \kappa_a u_{xt}(1,t) y_x(1,t) dt \right| \nonumber \\
& \quad \leqslant \frac{1}{\delta} \int_r^T \Big( \kappa_v^2 u_t^2(1,t) + \kappa_d^2  u_t^2(1,t-\tau) \Big) dt +  \frac{\kappa_a^2}{2\delta} \int_r^T u_{xt}^2(1,t) dt \nonumber \\
& \qquad \quad + \frac{\delta}{2} \int_r^T \Big( \frac{2}{\kappa_r} + \frac{2}{\kappa_b} \Big) \Big( \frac{1}{q_0} u^2(1,t) + C_1^2 u^2_x(1,t) \Big) dt  \nonumber \\
& \quad \leqslant \max \left\{ \frac{\kappa_v^2}{\delta}; \, \frac{\kappa_d^2}{\delta}; \, \frac{\kappa_a^2}{2\delta} \right\} \int_r^T \Big( u^2(1,t) + u_t^2(1,t-\tau) + u_{xt}^2(1,t) \Big) dt \nonumber\\
& \qquad \quad + \delta \left( \frac{1}{\kappa_b} + \frac{1}{\kappa_r} \right) \max \Big\{q_0^{-1}; \, C_1^2 \Big\} \int_r^T \Big( u^2(1,t) + u^2_x(1,t) \Big) dt \nonumber \\
& \quad \leqslant \delta \left( \frac{1}{\kappa_b} + \frac{1}{\kappa_r} \right) \max \Big\{q_0^{-1}; \, C_1^2 \Big\} \int_r^T \Big( u^2(1,t) + u^2_x(1,t) \Big) dt. \nonumber \\
& \qquad \quad + \frac{\max \left\{ \frac{\kappa_v^2}{\delta}; \, \frac{\kappa_d^2}{\delta}; \, \frac{\kappa_a^2}{2\delta} \right\}}{C_{\kappa_v, \kappa_d, \kappa_a}^{\gamma}} \Big( E(r) - E(T) \Big).
\end{align}

We now proceed to estimate the term $\displaystyle \int_r^T \int_0^1 \Big| u_t y_t \Big| dx \, dt$. For this purpose, by differentiating \eqref{eq:60} with respect to time, we have:
\begin{gather} \label{eq:114}
\left\{
\begin{array}{l}
\left( \sigma (y_t)_{xx} \right)_{xx} - (q (y_t)_x)_x = 0, \\
q(1) (y_t)_x(1,t) + \kappa_b y_t(1,t) - (\sigma (y_t)_{xx})_x(1,t) = u_t(1,t), \\
\sigma(1) (y_t)_{xx}(1,t) + \kappa_r (y_t)_x(1,t) = u_{xt}(1,t).
\end{array}
\right.
\end{gather}
Clearly, $y_t$ satisfies \eqref{eq:114}. Proceeding in the same manner as in \eqref{eq:112}, we get:
\begin{align} \label{eq:115}
\lVert y_t(\cdot,t) \rVert^2_{L^2(0,1)} & \leqslant \max\Big\{ \frac{2}{q_0^2}; \, \frac{2 C_1^2}{q_0} \Big\} \Big( u^2_{t}(1,t) + u^2_{xt}(1,t) \Big).
\end{align}
By applying Young's inequality, coupled with the estimates \eqref{eq:40} and \eqref{eq:115}, we thus derive the following inequalities:
\begin{align} \label{eq:116}
 \int_r^T \int_0^1 \left| u_t y_t \right| dx \, dt & \leqslant \frac{\tilde{\delta}}{2} \int_r^T \int_0^1 u_t^2 \, dx \, dt + \frac{1}{2\tilde{\delta}} \int_r^T \int_0^1 y_t^2 \, dx \, dt \nonumber \\
 & \leqslant \frac{\tilde{\delta}}{2} \int_r^T \int_0^1 u_t^2 \, dx \, dt + \dfrac{\max\Big\{ \dfrac{1}{q_0^2}; \, \dfrac{ C_1^2}{q_0} \Big\}}{\tilde{\delta}} \int_r^T \Big( u^2_t(1,t) + u^2_{xt}(1,t) \Big) \, dt \nonumber\\
 \int_r^T \int_0^1 \left| u_t y_t \right| dx \, dt  & \leqslant  \tilde{\delta} \int_r^T E(t) dt + \dfrac{\max\Big\{ \dfrac{1}{q_0^2}; \, \dfrac{ C_1^2}{q_0} \Big\}}{\tilde{\delta} C_{\kappa_v, \kappa_d, \kappa_a}^{\gamma}} \Big( E(r) - E(T) \Big),
\end{align}
for all $\tilde{\delta} > 0$. Finally, applying the triangle inequality to \eqref{eq:106} and  taking into account inequalities \eqref{eq:110}, \eqref{eq:113} and \eqref{eq:116}, we obtain:
\begin{align} \label{eq:117}
\begin{split}
& \int_r^T \left( u^2(1,t) + u_x^2(1,t) \right) \, dt \\
& \quad \leqslant \tilde{\delta} \int_r^T E(t) dt + \max \left\{ 1; \, \frac{4}{\kappa_b \, q_0^2}; \, \frac{4 C_1^2}{\kappa_r \, q_0} \right\}  \Big( E(T) + E(r) \Big) \\
& \qquad + \left( \frac{\max \left\{ \frac{\kappa_v^2}{\delta}; \, \frac{\kappa_d^2}{\delta}; \, \frac{\kappa_a^2}{2\delta} \right\}}{C_{\kappa_v, \kappa_d, \kappa_a}^{\gamma}} + \dfrac{\max\Big\{ \dfrac{1}{q_0^2}; \, \dfrac{ C_1^2}{q_0} \Big\}}{\tilde{\delta} C_{\kappa_v, \kappa_d, \kappa_a}^{\gamma}} \right) \Big( E(r) - E(T) \Big) \\
& \qquad + \delta \left( \frac{1}{\kappa_b} + \frac{1}{\kappa_r} \right) \max \Big\{q_0^{-1}; \, C_1^2 \Big\}  \int_r^T \Big( u^2(1,t) + u^2_x(1,t) \Big) dt
\end{split}
\end{align}
which leads to the expecting estimate \eqref{eq:98} by choosing $\delta$ as defined in \eqref{eq:97a}. 
\end{proof}


\begin{theorem} \label{th2}
Suppose that the function $\sigma$ is (WD) or (SD) and consider $\kappa_b, \, \kappa_r > 0$. Under the assumptions \eqref{eq:4} and \eqref{eq:24a}, the energy $E(t)$ of system \eqref{eq:1} defined in \eqref{eq:39}, exponentially decays to zero, i.e.:  
\begin{gather} \label{eq:119}
E(t) \leqslant E(0) \, e^{1 - \dfrac{t}{M_{\kappa_b,\kappa_r}}}, \quad \forall t \in \left[ M_{\kappa_b,\kappa_r}, \, +\infty \right), 
\end{gather}
where the constant $M_{\kappa_b,\kappa_r}$ is given in \eqref{eq:123} and is independent of $(u_0, \, u_1)$.
\end{theorem}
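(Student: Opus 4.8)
The plan is to combine the three preceding lemmas into a single integral inequality for the energy and then invoke a standard integral-to-exponential decay argument. The starting point is Lemma~\ref{l2}, which bounds $\int_r^T E(t)\,dt$ from above by $L(r)-L(T)$ plus $\varepsilon C_0$ times the boundary integral $\int_r^T\big(u^2(1,t)+u_x^2(1,t)\big)\,dt$. The crucial idea is that this boundary term, which is \emph{not} controlled by the dissipation alone, is precisely the quantity estimated in Lemma~\ref{l3}: it is bounded by $2\tilde\delta\int_r^T E(t)\,dt$ plus multiples of $E(r)-E(T)$ and $E(r)+E(T)$. Substituting the Lemma~\ref{l3} estimate into the Lemma~\ref{l2} inequality produces a term $\varepsilon C_0\cdot 2\tilde\delta\int_r^T E\,dt$ on the right-hand side, which I would absorb into the left-hand side by choosing $\tilde\delta$ small enough (relative to the fixed constant $\varepsilon\min\{2-\Upsilon_{\sigma,q},4e^{-2\tau}\}$). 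After absorption, every remaining term on the right is a constant multiple of $E(r)$, $E(T)$, or $L(r)-L(T)$.

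Once the $\int_r^T E$ term survives only on the left, I would use the norm equivalence from Proposition~\ref{p3} (namely $\Theta_1 E\le L\le\Theta_2 E$) to convert the boundary terms $L(r),L(T)$ back into $E(r),E(T)$, and use monotonicity of the energy from Proposition~\ref{p2} ($E(T)\le E(r)$) to collapse the $E(r)-E(T)$ and $E(r)+E(T)$ contributions into a single bound of the form $\int_r^T E(t)\,dt \le c\,E(r)$ for some positive constant $c$ depending on $\kappa_b,\kappa_r$ and the various structural constants. Letting $T\to+\infty$ (the energy is non-increasing and nonnegative, so $E(T)$ has a limit and the integral converges) gives $\int_r^\infty E(t)\,dt \le M_{\kappa_b,\kappa_r}\,E(r)$ for all $r\ge 0$, where $M_{\kappa_b,\kappa_r}$ is the assembled constant referenced in \eqref{eq:123}.

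The final step is the classical Komornik/Pazy integral argument: the inequality $\int_r^\infty E(t)\,dt\le M\,E(r)$ together with the nonincreasing property of $E$ yields exponential decay. Concretely, one shows the function $r\mapsto\int_r^\infty E(t)\,dt$ satisfies a differential inequality whose integration, combined with $E(t)\le\frac1{t-r}\int_r^t E(s)\,ds$, delivers the stated bound $E(t)\le E(0)\,e^{1-t/M_{\kappa_b,\kappa_r}}$ on $[M_{\kappa_b,\kappa_r},\infty)$.

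I expect the main obstacle to be bookkeeping in the absorption step: one must verify that $\varepsilon$ (already constrained by Propositions~\ref{p3} and the smallness condition \eqref{eq:97} from Lemma~\ref{l2}) and $\tilde\delta$ can be chosen \emph{simultaneously} so that $2\varepsilon C_0\tilde\delta < \varepsilon\min\{2-\Upsilon_{\sigma,q},4e^{-2\tau}\}$, i.e.\ $2C_0\tilde\delta$ is strictly less than the coefficient of $\int_r^T E$, while keeping all the constants in Lemma~\ref{l3} (in particular $C_2^\delta$, which blows up as $\tilde\delta\to 0$) finite and assembling them into an explicit $M_{\kappa_b,\kappa_r}$. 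The positivity $\kappa_b,\kappa_r>0$ is essential here, since the constants $C_3$ and $C_2^\delta$ carry factors $1/\kappa_b$ and $1/\kappa_r$; this is why the theorem, unlike the well-posedness result, requires strict positivity of these gains.
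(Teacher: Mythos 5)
Your proposal is correct and follows essentially the same route as the paper: insert the boundary-trace estimate of Lemma~\ref{l3} into the integrated inequality of Lemma~\ref{l2}, absorb the $2\varepsilon C_0\tilde\delta\int_r^T E$ term by fixing $\tilde\delta=\frac{1}{4C_0}\min\{2-\Upsilon_{\sigma,q},4e^{-2\tau}\}$, convert $L$ back to $E$ via Proposition~\ref{p3} and the monotonicity of $E$ to reach $\int_r^T E(t)\,dt\leqslant M_{\kappa_b,\kappa_r}E(r)$, and conclude by Komornik's integral inequality theorem. Your remarks on the simultaneous choice of $\varepsilon$ and $\tilde\delta$ and on why $\kappa_b,\kappa_r>0$ is needed match the paper's argument exactly.
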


\begin{proof}
Exploiting Proposition \ref{p2} and Lemma \ref{l1} yields that \eqref{eq:40} and \eqref{eq:68} hold. Then, by using Lemma \ref{l2} we obtain inequality \eqref{eq:91}. Since $\kappa_b, \, \kappa_r$ are positive, then by virtue of Lemma \ref{l3}, \eqref{eq:98} holds. After inserting \eqref{eq:98} into \eqref{eq:91}, it follows that:
\begin{align} \label{eq:120}
\begin{split}
& \varepsilon \min\Big\{ 2 - \Upsilon_{\sigma, q}; \, 4e^{-2\tau} \Big\} \int_r^T E(t) \, dt \leqslant L(r) - L(T) \\
&  + 2 \varepsilon C_0 \left[ \tilde{\delta} \int_r^T E(t) dt + C_3 \Big( E(T) + E(r) \Big)  + C_2^\delta \Big( E(r) - E(T) \Big) \right].
\end{split}
\end{align}
where $C_0, \, C_2^\delta$ and $C_3$ are respectively defined in \eqref{eq:92} and \eqref{eq:99}. Choosing $\tilde{\delta}$ such that:
\begin{align} \label{eq:121}
	\tilde{\delta} = \dfrac{1}{4 C_0} \min\Big\{ 2 - \Upsilon_{\sigma, q}; \, 4e^{-2\tau}\Big\},
\end{align}
in the preceding expression and utilizing inequality \eqref{eq:50} in Proposition \ref{p3}, we get:
\begin{align}
\int_r^T E(t) \, dt & \leqslant 2 \varepsilon^{-1} \left( \min\Big\{ 2 - \Upsilon_{\sigma, q}; \, 4e^{-2\tau} \Big\} \right)^{-1} \Bigg[ 
\Theta_2 E(r) - \Theta_1 E(T) \nonumber \\
& \quad  + 2 \varepsilon C_0 C_3  \Big( E(T) + E(r) \Big) + 2 \varepsilon C_0 C_2^\delta \Big( E(r) - E(T) \Big)
 \Bigg] \nonumber \\
\int_r^T E(t) \, dt  & \leqslant M_{\kappa_b,\kappa_r} E(r) \label{eq:122}
\end{align}
where
\begin{align} \label{eq:123}
\begin{split}
  M_{\kappa_b,\kappa_r} := & \dfrac{2}{\varepsilon \min\Big\{ 2 - \Upsilon_{\sigma, q}; \, 4e^{-2\tau} \Big\}} \, \left( \Theta_2 + 4 \varepsilon C_0 C_3 + 2 \varepsilon C_0 C_2^\delta \right),
\end{split}
\end{align}
Finally, using Theorem 8.1 in \cite{komornik1994}, we get \eqref{eq:119}. 
\end{proof}
		
\section{Conclusion}\label{sec13}

This article addresses the well-posedness and the asymptotic behavior of the solution to a degenerate Euler-Bernoulli beam problem under the action of an axial force, subjected to a boundary control with a fixed time delay $\tau > 0$.
We established two core results under the critical condition that the gain $\kappa_v$ of the non-delayed term, strictly dominates the gain $\kappa_d$ of the delayed one, i.e., $\kappa_v > |\kappa_d|$.
First, by using the Hille-Yosida theorem, we proved that system \eqref{eq:1} admits a unique solution on a novel Hilbert state space based on weighted Sobolev spaces.
Second, we constructed a novel Lyapunov functional incorporating integral terms weighted by the rigidity, axial force, and a time delay function. By employing the energy multiplier method, we derived rigorous estimates of the Lyapunov functional and its time derivative, thereby establishing the exponential stability of system \eqref{eq:1} and providing a precise estimate of the energy decay rate. 
Consequently, the feedback control loop is shown to be sufficiently robust to preserve exponential stability, even in the presence of the axial force.
In future work, we plan to conduct a numerical analysis of the problem. This step will not only validate the preservation of the established qualitative properties but also allow us to investigate the impact of the control parameters and the time delay $\tau$ on the system's energy dissipation. 

\bibliographystyle{unsrtsiam}
\bibliography{biblio}

\end{document}